\documentclass[12pt,table]{amsart}
\usepackage{amsmath}
\usepackage[margin=1.0in]{geometry}
\usepackage{amscd,amsmath,amsxtra,amsthm,amssymb,stmaryrd,xr,mathrsfs,mathtools,enumerate,commath, comment, enumitem, graphicx}
\usepackage{amsfonts, amssymb, amsthm, amsmath, braket, hyperref, mathtools, comment, mathrsfs, enumitem, cleveref}
\usepackage{stmaryrd}
\usepackage{amsfonts}
\usepackage{orcidlink}
\usepackage{multirow}
\usepackage{xcolor}
\usepackage{commath}
\usepackage{float}
\usepackage{comment}
\usepackage{tikz-cd}
\usepackage{tkz-graph}
\usepackage{colonequals}
\usepackage{hyperref}
\usepackage{graphicx}
\usepackage{bigstrut}
\usepackage{tabularx}
\usepackage{longtable, multirow, array}

\allowdisplaybreaks
\usepackage[T1]{fontenc}       
\usepackage{lmodern}

\usepackage{longtable} 
\usepackage{pdflscape} 
\usepackage{booktabs}
\usepackage{hyperref}

\definecolor{darkteal}{RGB}{0, 96, 96} 

\usepackage{xcolor}
\usepackage{hyperref}

\usepackage{xcolor}
\definecolor{academicteal}{RGB}{0, 47, 47}

\usepackage{hyperref}
\hypersetup{
    colorlinks=true,       
    linkcolor=blue,       
    citecolor=red, 
    filecolor=magenta,
    urlcolor=academicteal  
}

\usepackage[all,cmtip]{xy}
\DeclareFontFamily{U}{wncy}{}
\DeclareFontShape{U}{wncy}{m}{n}{<->wncyr10}{}
\DeclareSymbolFont{mcy}{U}{wncy}{m}{n}
\DeclareMathSymbol{\Sh}{\mathord}{mcy}{"58}
\usepackage[T2A,T1]{fontenc}
\usepackage[OT2,T1]{fontenc}

\usepackage{tikz}
\usetikzlibrary{shapes.geometric}
\usetikzlibrary{decorations.markings}
\tikzset{every loop/.style={min distance=10mm,looseness=10}}
\tikzstyle{vertex}=[auto=left,circle,minimum size=1pt,inner sep=0pt]

\newtheorem{theorem}{Theorem}[section]
\newtheorem{lemma}[theorem]{Lemma}

\newtheorem*{theorem*}{Theorem}
\newtheorem*{ass*}{Assumption}

\newtheorem{remark}{Remark}

\numberwithin{equation}{section}

\begin{document}
\allowdisplaybreaks

\title[]{Orthogonality relation between $GL(3)$ coefficients and additive characters with linear and non-linear phases}

\author[Venkatasubbareddy]{K. Venkatasubbareddy\, \orcidlink{0009-0006-7987-0835}}
\address[Venkatasubbareddy]{Theoretical Statistics and Mathematics Unit, ISI Kolkata, Kolkata, India-700108}
\email{venkatasubbareddy7313@gmail.com}


\keywords{Vaughan's identity, exponential sums,  Fourier coefficients, Hecke-Maass forms}
\subjclass[2020]{Primary: 11F30; Secondary: 11F66, 11L07, 11L20}

\maketitle

\maketitle
\begin{abstract}
We study the distribution and oscillatory properties of prime-supported Fourier coefficients attached to a Hecke--Maass form for $\mathrm{SL}(3, \mathbb{Z})$ twisted by additive characters. For linear phases involving irrationals of finite type, we construct an automorphic variant of Vaughan's identity and exploit bilinear cancellation to obtain an unconditional power-saving estimate. For non-linear phases $e(\alpha n^\theta)$ with $0 < \theta < 1$, we refine the Type~II bilinear forms via Weyl differencing and modern exponent pair methods. Combined with the analysis of untwisted $\mathrm{GL}(3)$ sums, this framework breaks the $2/3$-barrier of Jiang and L\"u and widens the admissible range of non-trivial cancellation.
\end{abstract}


\section{Introduction}

Estimating exponential sums twisted by arithmetic functions supported on primes is a central theme in analytic number theory \cite{JiangLu2017}. For a general arithmetic sequence $\mathcal{A} = \{a(n)\}_{n \geq 1}$, one seeks non-trivial upper bounds for sums of the shape
\begin{equation*}
S(\mathcal{A}, X, \eta) := \sum_{n \leq X} \Lambda(n) a(n) e(\alpha n^\eta) \quad (\alpha \in \mathbb{R},\ \eta > 0),
\end{equation*}
where $\Lambda(n)$ is the classical von Mangoldt function and $e(x) := \exp(2\pi i x)$. In the benchmark setting where $a(n) \equiv 1$, I.~M.~Vinogradov established his celebrated estimates for the linear phase $\eta = 1$ in connection with the ternary Goldbach problem, alongside square-root power savings for the non-linear phase $\eta = 1/2$. Later refinements based on combinatorial identities by Iwaniec and Kowalski \cite{IwaniecKowalski} and Heath-Brown's identity by Ren \cite{LiuRen2012} extended non-trivial cancellation across the general non-linear spectrum $0 < \eta < 1$.

When the arithmetic weights $a(n)$ arise from the Fourier coefficients of automorphic forms, the problem probes the deep orthogonality between automorphic representations, prime numbers, and additive characters \cite{Fouvry-Ganguly2014}, \cite{Hou-Jiang-Lu}, \cite{JiangLu2019}.


For higher-rank groups such as $\mathrm{SL}(m, \mathbb{Z})$ with $m \geq 3$, investigating these sums becomes substantially more difficult (see \cite{JiangLu2017}). In this paper, we focus on the case $m = 3$. Let $F$ be a Hecke--Maass form for $\mathrm{SL}(3, \mathbb{Z})$ of type $\nu = (\nu_1, \nu_2)$ with normalized Fourier coefficients $A_F(n_1, n_2)$ satisfying $A_F(1, 1) = 1$. The Godement--Jacquet $L$-function attached to $F$ is defined for $\Re s > 1$ by the Dirichlet series and Euler product
\begin{align*}
L(s, F) &= \sum_{n=1}^\infty \frac{A_F(n, 1)}{n^s} = \prod_p \prod_{j=1}^3 \left( 1 - \frac{\alpha_F(p, j)}{p^s} \right)^{-1} \notag \\
&= \prod_p \left( 1 - \frac{A_F(p, 1)}{p^s} + \frac{A_F(1, p)}{p^{2s}} - \frac{1}{p^{3s}} \right)^{-1},
\end{align*}
where the local Satake parameters satisfy
\begin{equation*}
\sum_{j=1}^3 \alpha_F(p, j) = A_F(p, 1), \quad \sum_{1 \leq j < k \leq 3} \alpha_F(p, j)\alpha_F(p, k) = A_F(1, p),  \prod_{j=1}^3 \alpha_F(p, j) = 1,
\end{equation*}
and the Fourier coefficients satisfy $A_F(p,1)=\overline{A_F(1,p)}$ and the multiplicative relation
\begin{equation*}
    A_F(m,1)A_F(n,1)=\sum_{d|(m,n)}A_F\left(\frac{mn}{d^2},d\right). 
\end{equation*}
While the generalized Ramanujan conjecture asserts that $|\alpha_F(p, j)| = 1$, the best known unconditional bounds toward Ramanujan on $\mathrm{GL}(3)$ due to Kim and Sarnak \cite{KimSarnak2003} yield
\begin{equation*}
|\alpha_F(p, j)| \leq p^{5/14}, \quad \text{whence} \quad A_F(n, 1) \ll n^{5/14} d_3(n).
\end{equation*}
The Rankin--Selberg convolution $L$-function associated with $F$ is defined for $\Re s > 1$ by the Euler product
\begin{equation*}
L(s, F \times F) = \prod_p \prod_{j=1}^3 \prod_{k=1}^3 \left(1 - \frac{\alpha_F(p, j)\overline{\alpha_F(p, k)}}{p^s}\right)^{-1}.
\end{equation*}
The coefficient of $p^{-s}$ in the Euler product expansion of $L(s, F \times F)$ is $|A_F(p, 1)|^2$.
By the Rankin--Selberg theory, 
the $L$-function $L(s, F \times F)$ has a meromorphic continuation to the entire complex plane with a simple pole at $s = 1$. Applying Perron's formula and contour shift to the Dirichlet series representation of $L(s, F \times F)$, the residue at the simple pole $s = 1$ yields the mean-square estimate (see \cite[Remark 12.1.8]{Goldfeld2006})
\begin{equation*}
\sum_{n \leq N} |A_F(n, 1)|^2 \ll_F N.
\end{equation*}

Taking the logarithmic derivative of $L(s, F)$ defines the generalized von Mangoldt function $\Lambda_F(n)$
\begin{equation*}
-\frac{L'}{L}(s, F) = \sum_{n=1}^\infty \frac{\Lambda_F(n)}{n^s} = \sum_{n=1}^\infty \frac{\Lambda(n) a_F(n)}{n^s}, \quad a_F(p^k) = \sum_{j=1}^m \alpha_F(p, j)^k.
\end{equation*}
Consequently, 
\begin{equation*}
\Lambda_F(n) = 
\begin{cases}
\log p \sum_{j=1}^3 \alpha_F(p, j)^k & \text{if } n = p^k, \\
0 & \text{otherwise},
\end{cases}
\end{equation*}

Similarly, the generalized M\"obius function $\mu_F(n)$ is generated by the reciprocal of the Godement--Jacquet $L$-function $L(s, F)$ for $\Re s > 1$
\begin{equation*}
L^{-1}(s, F) = \sum_{n=1}^\infty \frac{\mu_F(n)}{n^s} = \prod_p \prod_{j=1}^3 \left( 1 - \frac{\alpha_F(p, j)}{p^s} \right) = \prod_p \left( 1 - \frac{A_F(p, 1)}{p^s} + \frac{A_F(1, p)}{p^{2s}} - \frac{1}{p^{3s}} \right) .
\end{equation*}
By comparing coefficients on both sides of the Euler product, we see that $\mu_F(n)$ is a multiplicative function supported exclusively on cubefree (3-power-free) integers, vanishing whenever $p^4 \mid n$ for some prime $p$. At prime powers, its values are given explicitly in terms of the Satake roots and Fourier coefficients by
\begin{equation*}
\mu_F(n) = 
\begin{cases}
1 & \text{if } n = 1, \\
-A_F(p, 1) & \text{if } n = p, \\
A_F(1, p) & \text{if } n = p^2, \\
-1 & \text{if } n = p^3, \\
0 & \text{if } p^4 \mid n \text{ for some prime } p.
\end{cases}
\end{equation*}
In contrast to the classical M\"obius function $\mu(n)$, the coefficients $\mu_F(n)$ are unbounded and are not supported solely on square-free numbers.

In this paper, we study the twisted exponential sum
\begin{equation*}
S(F, X, \eta) := \sum_{n \leq X} \Lambda_F(n) e(\alpha n^\eta) \quad (\alpha \in \mathbb{R},\ 0 < \eta \leq 1)
\end{equation*}
for a Hecke--Maass form $F$ on $\mathrm{SL}(3, \mathbb{Z})$.

\begin{remark}
For any $0\leq \eta\leq 1$, we can write
\begin{align*}
S(F,X, \eta):&=\sum_{n \leq X} \Lambda_F(n) e(n^\eta\alpha) \\
&= \sum_{p \leq X} \log p \sum_{j=1}^3 \alpha_F(p, j) e(p^\eta\alpha) + \sum_{\substack{n = p^k \leq X \\ k \geq 2}} \log p \sum_{j=1}^3 \alpha_F(p, j)^k e(n^\eta\alpha) \\
&= \sum_{n \leq X} \Lambda(n) A_F(n, 1) e(n^\eta\alpha) + O\Biggl( \sum_{\substack{n = p^k \leq X \\ k \geq 2}} \log p\,\Bigl( \Bigl| A_F(p^k, 1) \Bigr| +\Bigl|\sum_{j=1}^3 \alpha_F(p, j)^k \Bigr|\Bigr) \Biggr).
\end{align*}
For the higher prime powers contribution, we have
\begin{equation*}
A_F(p^2, 1) = A_F(p, 1)^2 - A_F(1, p), \quad a_F(p^2) = A_F(p, 1)^2 - 2 A_F(1, p).
\end{equation*}
Hence,
\begin{align*}
\sum_{\substack{p^k \leq X \\ k \geq 2}} \bigl( |A_F(p^k, 1)| + |a_F(p^k)| \bigr) 
\ll & \sum_{\substack{p^2 \leq X }} |A_F(p, 1)|^2 + \sum_{\substack{p^2 \leq X }} |A_F(1, p)| \\
&\qquad+\sum_{\substack{p^k \leq X \\ k \geq 3}} |A_F(p^k, 1)| + \sum_{\substack{p^k \leq X \\ k \geq 3}} |a_F(p^k)|\\
\ll&\sum_{\substack{p \leq \sqrt{X} }} |A_F(p, 1)|^2 + \Bigl(X^{\frac{1}{2}}\sum_{\substack{p \leq \sqrt{X} }} |A_F(1, p)|^2\Bigr)^{\frac{1}{2}} +\sum_{p \leq X^{1/3} } p^{\frac{15}{14}+\varepsilon}\\
\ll &x^{1/2+\varepsilon}+x^{29/42+\varepsilon}.
\end{align*}
Therefore, 
\begin{align*}
\sum_{n \leq X} \Lambda_F(n) e(n^\eta\alpha) = \sum_{n \leq X} \Lambda(n) A_F(n, 1) e(n^\eta\alpha) + O\bigl(X^{29/42+\varepsilon}\bigr).
\end{align*}
\end{remark}

Since the secondary error term is strictly smaller than $X^{3/4}$, non-trivial estimates for $S(F, X, \eta)$ directly translate into cancellation for the prime sum $\sum_{n \leq X} \Lambda(n) A_F(n, 1) e(\alpha n^\eta)$. We divide the analysis into the linear phase regime $\eta = 1$ and the non-linear phase regime $\eta = \theta \in (0, 1)$.

\noindent
\textbf{The Linear Phase Regime.}
When $\eta = 1$, we evaluate the orthogonality of $\Lambda_F(n)$ against the classical linear phase $e(n\alpha)$. For $\mathrm{GL}(2)$ forms, Fouvry and Ganguly \cite{Fouvry-Ganguly2014} proved the strong orthogonality estimate 
$$\sum_{n \leq X} \Lambda(n) A_F(n) e(n\alpha) \ll X \exp(-c\sqrt{\log X}).$$
For $\mathrm{GL}(3)$, Hou, Jiang, and L\"u \cite{Hou-Jiang-Lu} obtained the  analogous sub-exponential bound 
$$\sum_{n \leq X} \Lambda(n) A_F(n, 1) e(n\alpha) \ll X \exp(-c\sqrt{\log X}).$$
While these results establish genuine asymptotic orthogonality, the rate of decay is strictly sub-polynomial and falls short of a power saving.

A major objective of this work is to demonstrate that by restricting $\alpha$ to irrationals of type $1$, one can cross this barrier and extract a substantial, unconditional \textbf{power-saving estimate} of the form $O(X^{1-\delta+ \varepsilon})$ with an explicit $\delta > 0$. An irrational number $\alpha$ is said to be of \textit{type $t\geq 1$}, if $t$ is the supremum of all $\delta$ for which 
\begin{equation*}
    \liminf_{q\rightarrow \infty}q^\delta||q\alpha||=0, 
\end{equation*}
where $\displaystyle||\alpha||=\min_{n\in \mathbb{Z}}|\alpha-n|$. From Roth's theorem, it follows that for any algebraic irrational number $\alpha$, we have $|q\alpha-a|\gg q^{-(1+\varepsilon)}$. Therefore, all algebraic irrationals are of type 1. Moreover, it is well known that almost all irrationals are of type $1$.

For untwisted $\mathrm{GL}(3)$ Fourier coefficients, S.~D.~Miller \cite{Miller2006} established the additively twisted estimate
\begin{equation*}
\sum_{n \leq X} A_F(n, 1) e(n\alpha) \ll_{F,  \varepsilon} X^{\frac{1}{2} + \rho +  \varepsilon},
\end{equation*}
which holds unconditionally with $\rho = 1/4$ (and conjecturally with $\rho = 0$). By constructing an analogue of Vaughan's identity for the Godement--Jacquet $L$-function $L(s, F)$ and coupling Miller's bound with bilinear reciprocity estimates over modular intervals, we exploit genuine cancellation in both Type~I and Type~II regimes. This yields an unconditional power-saving reduction of $X^{1/20}$.

\begin{theorem}\label{thm:linear}
Let $F$ be a Hecke--Maass form for $\mathrm{SL}(3, \mathbb{Z})$ and $\alpha$ an irrational number of type $1$. Then for any $ \varepsilon > 0$, we have
\begin{equation*}
S(F, X) = \sum_{n \leq X} \Lambda_F(n) e(n\alpha) \ll_{F,  \varepsilon} X^{\frac{9+2\rho}{10}+ \varepsilon},
\end{equation*}
where $\rho$ denotes the parameter in Miller's bound. In particular, substituting the unconditional value $\rho = 1/4$ yields
\begin{equation*}
S(F, X) \ll_{F,  \varepsilon} X^{\frac{19}{20}+ \varepsilon}.
\end{equation*}
\end{theorem}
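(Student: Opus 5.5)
We plan to prove Theorem~\ref{thm:linear} with an automorphic version of Vaughan's identity, splitting the sum into Type~I and Type~II pieces. First, the Remark lets us replace $\Lambda_F(n)$ by $\Lambda(n)A_F(n,1)$ at a cost of $O(X^{29/42+\varepsilon})$. This is admissible because $29/42<19/20$; more generally, since $\rho\ge 0$ we have $29/42<9/10\le (9+2\rho)/10$.

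For the identity itself, fix parameters $U,V\ge 1$ and write
\begin{equation*}
-\frac{L'}{L}(s,F) = G(s) - L(s,F)M(s)G(s) - L'(s,F)M(s) + \Bigl(-\frac{L'}{L}(s,F)-G(s)\Bigr)\bigl(1-L(s,F)M(s)\bigr).
\end{equation*}
Here $G(s)=\sum_{n\le U}\Lambda_F(n)n^{-s}$ and $M(s)=\sum_{d\le V}\mu_F(d)d^{-s}$. Comparing coefficients decomposes $S(F,X)$ into four pieces:
\begin{itemize}
\item a trivial piece $\sum_{n\le U}$;
\item a Type~I piece $\sum_{d\le V}\mu_F(d)\sum_{m\le X/d}A_F(m,1)\log m\, e(dm\alpha)$;
\item a Type~I piece $\sum_{k\le UV}c_k\sum_{m\le X/k}A_F(m,1)e(km\alpha)$, with $c_k=\sum_{dn=k,\,d\le V,\,n\le U}\mu_F(d)\Lambda_F(n)$;
\item a Type~II piece $\sum_{m>U}\sum_{k>V}\Lambda_F(m)b_k\,e(mk\alpha)$, with $b_k=\sum_{d\mid k,\,d\le V}\mu_F(d)A_F(k/d,1)$.
\end{itemize}
The coefficient bounds come from the mean-square estimate $\sum_{n\le N}|A_F(n,1)|^2\ll N$, which also gives $\sum_{d\le V}|\mu_F(d)|^2\ll V^{1+\varepsilon}$ (the values of $\mu_F$ at $p^2,p^3$ are controlled by $A_F(1,p)$ and Rankin--Selberg), together with Kim--Sarnak for the sparse prime powers.

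For the Type~I pieces we apply Miller's bound to the inner sum, using it with the twist $d\alpha$ uniformly in the additive twist; removing $\log m$ by partial summation, the inner sum is $\ll (X/d)^{1/2+\rho+\varepsilon}$. Cauchy--Schwarz in $d$ then gives a total of at most $X^{1/2+\rho+\varepsilon}(UV)^{1/2-\rho}$, up to $X^\varepsilon$.

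For the Type~II piece we split dyadically $m\sim M$, $k\sim K$ with $MK\asymp X$ and $M>U$, $K>V$. Cauchy--Schwarz over $k$ bounds the piece by
\begin{equation*}
\Bigl(\sum_{k\sim K}|b_k|^2\Bigr)^{1/2}\Bigl(\sum_{m_1,m_2\sim M}\Lambda_F(m_1)\overline{\Lambda_F(m_2)}\sum_{k\sim K}e\bigl(k(m_1-m_2)\alpha\bigr)\Bigr)^{1/2}.
\end{equation*}
The first factor is $\ll K^{1/2+\varepsilon}$. For the second, we sum the geometric series in $k$ to get $\min(K,\|h\alpha\|^{-1})$ with $h=m_1-m_2$. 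The quantity $\sum_{m\sim M}|\Lambda_F(m)\Lambda_F(m+h)|$ is $\ll M^{1+\varepsilon}$ on average over $h$, by Cauchy--Schwarz and Rankin--Selberg. The standard lemma $\sum_{h\le H}\min(K,\|h\alpha\|^{-1})\ll (H+q)(K/q+1)\log(qH)$ is applied with $q$ a convergent denominator of $\alpha$. This is where type~$1$ enters: we may choose $q\asymp X^{\lambda}$ with $\|q\alpha\|\le q^{-1}$ and $q\gg X^{\lambda-\varepsilon}$ for any desired $\lambda$. The result is the Type~II bound
\begin{equation*}
\ll X^{1+\varepsilon}\bigl(M^{-1/2}+K^{-1/2}+q^{-1/2}+(q/X)^{1/2}\bigr).
\end{equation*}
Taking $q\asymp X^{1/2}$, this is $\ll X^{1+\varepsilon}(U^{-1/2}+V^{-1/2}+X^{-1/4})$.

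It remains to balance. With $U=V=X^{\beta}$, the Type~I total is $X^{1/2+\rho+2\beta(1/2-\rho)}$ and the Type~II total is $X^{1-\beta/2}$. I expect the intended parameter choice to give $X^{(9+2\rho)/10}$, i.e.\ a saving of $X^{(1-2\rho)/10}$. Taking $U=V=X^{\beta}$ with $\beta=(1-2\rho)/5$ makes Type~II exactly $X^{1-\beta/2}=X^{(9+2\rho)/10}$, and the Type~I exponent $1/2+\rho+(1-2\rho)^2/5$ is smaller. So the target exponent is achieved, possibly with some room to spare.

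The main obstacle is the Type~II bilinear estimate. The difficulty is controlling $\sum_m|\Lambda_F(m)\Lambda_F(m+h)|$ without Ramanujan, and making Miller's bound uniform in the twist $d\alpha$. If the shifted-convolution step proves problematic, the fallback is to apply Cauchy--Schwarz over $m$ instead, keeping $|\Lambda_F|^2$ only on the diagonal. The large-sieve-type bound $\sum_{k_1,k_2}b_{k_1}\overline{b_{k_2}}\min(M,\|(k_1-k_2)\alpha\|^{-1})$ can then be handled by the inequality $|b_{k_1}b_{k_2}|\le |b_{k_1}|^2+|b_{k_2}|^2$ followed by the same reciprocity lemma.
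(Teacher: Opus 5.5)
Your proof is correct and shares the paper's architecture: the Vaughan-type identity for $-L'/L(s,F)$, Miller's twist-uniform bound for the Type~I pieces, a bilinear estimate with a type-$1$ approximation $q\asymp X^{1/2}$ for Type~II, and $U=V$. Your Type~II step, however, is genuinely sharper. After opening the square, the paper's Lemma~\ref{lem:linear_bilinear_1_4} applies two further Cauchy--Schwarz inequalities and only reaches $X^{1+\varepsilon}(1/M+1/N+1/q+q/X)^{1/4}$. You bound the shifted correlation directly. In fact $\sum_{m}|\Lambda_F(m)\Lambda_F(m+h)|\le\sum_{m\le 2M}|\Lambda_F(m)|^2\ll M\log^2 M$ holds \emph{uniformly} in $h$ (use $|ab|\le\frac12(|a|^2+|b|^2)$), not merely on average. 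So your fallback is unnecessary, and you get the classical exponent $1/2$ under the very same mean-square hypotheses as the paper's lemma.

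In Type~I you also keep the decay in $d$, which gives $(UV)^{1/2-\rho}$ instead of the paper's $(UV)^{1/2}$. Together these let you reach $X^{(9+2\rho)/10+\varepsilon}$ with the smaller choice $U=V=X^{(1-2\rho)/5}$. The paper instead needs $U=V=X^{(2-4\rho)/5}$ to balance $X^{1/2+\rho}U$ against $XU^{-1/4}$. Your bounds actually give more than the stated result. Balancing $X^{1/2+\rho+\beta(1-2\rho)}$ against $X^{1-\beta/2}$ at $\beta=(1-2\rho)/(3-4\rho)$ yields $X^{\frac{5-6\rho}{2(3-4\rho)}+\varepsilon}$, i.e.\ $X^{7/8+\varepsilon}$ for $\rho=1/4$.

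Two small remarks. First, your opening replacement of $\Lambda_F$ by $\Lambda A_F$ via the Remark is never used, since you apply the identity to $\Lambda_F$ itself. Second, Miller's bound is already uniform in the additive twist, exactly as the paper uses it, so that is not an obstacle.
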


\noindent
\textbf{The Non-Linear Phase Regime .} When the character has a non-linear phase $e(\alpha n^\theta)$ with $0 < \theta < 1$, bounding the sum becomes significantly more delicate.

To motivate our approach, we first consider the untwisted sum of $\mathrm{GL}(3)$ Fourier coefficients
\begin{equation*}
\sum_{n \leq X} \lambda_F(n, 1) e(\alpha n^\theta).
\end{equation*}
For $\theta \neq 1/3$, Ren and Ye \cite{Ren-Ye} established the bound $O\big(X^{\frac{3\theta}{2}} \log X\big)$ via the $\mathrm{GL}(3)$ Voronoi summation formula. For the range $1/3 < \theta < 7/9$, Kumar, Mallesham, and Singh \cite{Sumit-Mallesham-saurabh} applied Munshi's $\delta$-method to prove the bound $O\big(X^{\frac{3}{4} + \frac{9\theta}{28} +  \varepsilon}\big)$. Since the Ren--Ye estimate satisfies $X^{\frac{3\theta}{2}} \leq X^{\frac{3}{4} + \frac{9\theta}{28}}$ for all $\theta \leq 7/11 \approx 0.636$, combining the two results yields the estimate
\begin{equation*}
\sum_{n \leq X} A_F(n, 1) e(\alpha n^\theta) \ll_{F, \alpha} X^{\frac{3}{4} + \frac{9\theta}{28} +  \varepsilon}
\end{equation*}
across the entire range $0 < \theta < 7/9$, $\theta\neq 1/3$.

For prime-supported sums, Jiang and L\"u \cite{JiangLu2017} obtained the non-trivial bound

\begin{equation*}
    S(F,X,\theta)\ll (X^{5/6}+X^{(4+3\theta)/6}+X^{1-\theta/2})\log^{9/2}X
\end{equation*}
only up to $\theta < 2/3$. Their limitation arises from applying generic bilinear inequalities to the Type~II sums.

In this paper, we bypass the $2/3$-barrier by refining the bilinear estimates. Applying Cauchy--Schwarz followed by Weyl differencing reduces the Type~II sum to a one-dimensional exponential sum. Evaluating this inner sum using Bourgain's exponent pair $(13/84, 55/84)$ yields substantially stronger cancellation, establishing the following result.

\begin{theorem}\label{thm:non-linear}
Let $F$ be a Hecke--Maass form for $\mathrm{SL}(3, \mathbb{Z})$, $\alpha \in \mathbb{R}$, and $0 < \theta < \frac{497}{691} \approx 0.71925$, $\theta\neq \frac{1}{3}$. Then for any $ \varepsilon > 0$,
\begin{equation*}
S(F, X, \theta) = \sum_{n \leq X} \Lambda_F(n) e(\alpha n^\theta) \ll_{\alpha, F,  \varepsilon} X^{\frac{3143+443\theta}{3528}+ \varepsilon}.
\end{equation*}
\end{theorem}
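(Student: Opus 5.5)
We first use the Remark to replace $S(F,X,\theta)$ by $\sum_{n\le X}\Lambda(n)A_F(n,1)e(\alpha n^\theta)$, at a cost of $O(X^{29/42+\varepsilon})$. This is admissible because $29/42<3143/3528$. Next we apply a $\mathrm{GL}(3)$ analogue of Vaughan's identity, the same one constructed for Theorem~\ref{thm:linear}. It is obtained from
\[
-\frac{L'}{L}(s,F)=G(s)-L'(s,F)M(s)-\Bigl(\frac{L'}{L}(s,F)+G(s)\Bigr)\bigl(1-L(s,F)M(s)\bigr),
\]
where $G(s)=\sum_{n\le U}\Lambda_F(n)n^{-s}$ and $M(s)=\sum_{m\le V}\mu_F(m)m^{-s}$. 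This splits the sum, after dyadic decomposition, into three pieces: a trivial piece of size $O(U^{1+\varepsilon})$, Type~I sums
\[
\sum_{m\le UV}a_m\sum_{\ell\le X/m}A_F(\ell,1)(\log \ell)\,e(\alpha (m\ell)^\theta),
\]
and Type~II sums $\sum_{m\sim M}\sum_{\ell\sim L}a_m b_\ell\,e(\alpha (m\ell)^\theta)$ with $M\ge U$, $L\ge V$, $ML\asymp X$. In these, $a_m$ and $b_\ell$ are built from $\Lambda_F$, $\mu_F$ and the $\mathrm{GL}(3)$ coefficients. The only information we use about them is mean square: the Rankin--Selberg bound $\sum_{n\le N}|A_F(n,1)|^2\ll N$ gives $\sum|a_m|^2\ll M^{1+\varepsilon}$ and $\sum|b_\ell|^2\ll L^{1+\varepsilon}$. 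The multiplicativity of the Hecke relations is what lets the Dirichlet convolution of $A_F(\cdot,1)$ with $\mu_F$ be handled. A subtle point is that the Type~I inner variable carries $A_F(\ell,1)$ rather than $1$, because $L'(s,F)M(s)$ is the factorisation that appears.

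For the Type~I sums we rewrite the phase as $e(\beta \ell^\theta)$ with $\beta=\alpha m^\theta$, and use partial summation to remove $\log\ell$. The untwisted bound quoted in the introduction, uniform in the coefficient $\beta$ (Ren--Ye for small $\theta$, Kumar--Mallesham--Singh for $1/3<\theta<7/9$), then applies. The uniformity in $\beta$ has to be tracked: in the Ren--Ye/$\delta$-method bound it enters as a power of $|\beta|$, and this is exactly where $\theta\neq 1/3$ is needed, since at $\theta=1/3$ the Voronoi dual phase can be stationary. Summing over $m\le UV$ with the $\ell^2$ bound on $a_m$ gives roughly
\[
(UV)^{1/4-9\theta/28}\,X^{3/4+9\theta/28+\varepsilon}\cdot(\text{powers of } m^\theta).
\]
This is the constraint that forces $UV$ to be small.

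For the Type~II sums, assume without loss of generality that $L\le M$ (or choose the variable ranges suitably). Apply Cauchy--Schwarz in $m$ to get
\[
|\Sigma_{II}|^2\ll M^{1+\varepsilon}\sum_{m\sim M}\Bigl|\sum_{\ell\sim L}b_\ell e(\alpha m^\theta\ell^\theta)\Bigr|^2.
\]
Then perform Weyl differencing in $\ell$ with a shift parameter $H\le L$ (van der Corput's $A$-process), and open the square and swap sums. This reduces everything to sums over $m\sim M$ of
\[
e\bigl(\alpha m^\theta((\ell+h)^\theta-\ell^\theta)\bigr),
\]
which are one-dimensional exponential sums with phase derivative of size $T\asymp |\alpha| X^\theta h/(LM)$. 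We estimate the $m$-sum by Bourgain's exponent pair $(k,\lambda)=(13/84,55/84)$, giving $T^{k}M^{\lambda-k}$. The diagonal $h=0$ contributes $X^2/H$. Collecting terms, we get
\[
\Sigma_{II}\ll X^{\varepsilon}\Bigl(XH^{-1/2}+X^{1/2}L^{1/2}M^{1/2}\bigl(T_H^{k}M^{\lambda-k}\bigr)^{1/2}\Bigr),
\]
with $T_H=X^{\theta}H/X$ in the normalisation where $h\sim H$. We then optimise $H$, and use the Type~II range $V\le L\le X/U$ to choose between $m$ and $\ell$ as the differenced variable.

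The final step, which is the main obstacle, is balancing $U$ and $V$ and choosing $H$. The Type~II bound is only good when both $L$ and $M$ stay away from $1$ and $X$ by an amount depending on $\theta$. The Type~I bound, with its $\mathrm{GL}(3)$ exponent $3/4+9\theta/28$, tolerates only small $UV$. The denominators $3528=84\cdot 42$ and $28$ indicate that the optimum comes from equating the Bourgain-pair Type~II bound at the endpoint $L=V$ (or $M=U$) with the Type~I bound at $m\asymp UV$. I would set $U=V=X^{\kappa}$ and solve the resulting linear equations in $\kappa$ and $\log_X H$. The resulting exponent should be $(3143+443\theta)/3528$, and the condition that it be less than $1$ gives $\theta<385/443$. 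The sharper restriction $\theta<497/691$ should instead come from the Type~I input's range $\theta<7/9$ combined with the requirement that $H\ge1$ and that $T$ be large enough for the exponent pair to beat the trivial bound. I expect some case-checking of which term dominates across subranges of $\theta$, including $\theta<1/3$, where the Ren--Ye bound is better.
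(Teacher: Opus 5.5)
Your outline follows the paper's route: Vaughan's identity, Type~I from the untwisted $\mathrm{GL}(3)$ bound, and Type~II by Cauchy--Schwarz, Weyl differencing and Bourgain's pair. The step that fails is the Type~II estimate. After differencing, your $m$-sum has derivative $T\asymp hX^{\theta-1}$, which is $<1$ for every shift $h<X^{1-\theta}$. In that regime no bound of the form $T^{k}M^{\lambda}$ holds. If $TM$ is small the sum is $\asymp M$; otherwise it is in general of size $\min(M,1/T)$, and the Kusmin--Landau bound is sharp there. So a factor $T^{k}$ with $T<1$ is a spurious gain.

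These small shifts alone leave a term $\gg X^{2-\theta}$ in your bound for $|\Sigma_{II}|^2$, whatever $H\le L$ you choose, so the method cannot beat $X^{1-\theta/2}$. Moreover, with only mean-square information nothing beats $M\sqrt{L}$: take $b_\ell=\sqrt{L}$ at a single $\ell_0$ and $a_m=e(-\alpha(m\ell_0)^{\theta})$. Hence at an end of the Type~II range the saving is at most the square root of the shorter length. In particular your argument cannot reach $(3143+443\theta)/3528$ for $\theta<385/2207$, where that exponent lies below $1-\theta/2$. For $\theta<385/3971$, partial summation against $e(-\alpha x^{\theta})$ would turn the stated bound into $\sum_{n\le X}\Lambda_F(n)\ll X^{1-\delta}$, i.e.\ a zero-free strip for $L(s,F)$, which is far beyond current knowledge.

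The paper's own bilinear lemma has exactly this defect, so matching its numbers will not close the gap. At $N=V$ with its choice $H=N^{29/97}X^{13(1-\theta)/97}$, one has $HNX^{\theta-1}\le X^{-1/7}$. So the $n$-sums it bounds by $(hX^{\theta-1})^{k}N^{\lambda}<N$ do not oscillate at all; this is why Bourgain's pair appears to beat the conjectured pair $(\varepsilon,\tfrac12+\varepsilon)$ in its table. Also, its check $H\le M$ is made only at $M\asymp X/V$. With $U\to1$, sums with $M$ near $1$ also occur, and there mean-square information gives no saving whatsoever.

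Separately, your final balancing is asserted rather than derived, and your account of the constants is not where they come from. In the paper one takes $U\to1$, not $U=V$. The Type~I bound used is the crude $X^{3/4+9\theta/28}V^{1/2}$, not your $(UV)^{1/4-9\theta/28}$; you also leave open the uniformity of the latter in $\beta=\alpha m^{\theta}$, as the paper does. The Type~II bound is $X^{(181+13\theta)/194}N^{-29/194}$ evaluated at $N=V$. Equating the two gives $V=X^{(497-691\theta)/1764}$, and the range $\theta<497/691$ is exactly the condition $v>0$. It is not an $H\ge1$ or $\theta<7/9$ condition, and with your choices the linear equations come out different.

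There are also smaller slips:
\begin{itemize}
\item Your Vaughan identity omits the term $-L(s,F)G(s)M(s)$; its right-hand side equals $-\frac{L'}{L}(s,F)+L(s,F)G(s)M(s)$.
\item With $T$ the derivative, an exponent pair gives $T^{k}M^{\lambda}$, not $T^{k}M^{\lambda-k}$.
\item Your displayed Type~II bound reads $X\,(T_H^{k}M^{\lambda-k})^{1/2}$ once $LM\asymp X$, which is worse than trivial whenever $T_H\ge1$.
\end{itemize}
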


\remark{
Theorem~\ref{thm:non-linear} broadens the admissible range of non-linearity from $\theta < 2/3 \approx 0.66667$ up to $\theta < 497/691 \approx 0.71925$. Furthermore, comparing the balanced exponent $\frac{3143+443\theta}{3528}$ against $\max\{5/6, 1-\theta/2, (4+3\theta)/6\}$ shows that our bound improves Jiang and L\"u's estimate on the intervals $\theta \in (0, 385/2207) \cup (791/1321, 497/691)$. In Section~ \ref{Sec-analysis-exp-pair}, we provide a parametric analysis of how the choice of exponent pair $(k, \lambda)$ governs the admissible range of $\theta$, demonstrating that decreasing $k$ systematically pushes the reach of the method toward the critical decomposition barrier $\theta < 7/9 \approx 0.77778$.

}

\subsection{An Analogue of Vaughan's Identity}

We have the Dirichlet series identity 
\begin{align*}
-\frac{L'(s, F)}{L(s, F)} 
&= H_1(s) - L'(s, F) H_2(s) - L(s, F) H_1(s) H_2(s) \notag \\
&\quad + \left(-\frac{L'(s, F)}{L(s, F)} - H_1(s)\right) \bigl(1 - L(s, F) H_2(s)\bigr),
\end{align*}
where
\begin{equation*}
H_1(s) = \sum_{n \leq U} \frac{\Lambda_F(n)}{n^s} \quad \text{and} \quad H_2(s) = \sum_{n \leq V} \frac{\mu_F(n)}{n^s},
\end{equation*}
for any $U, V \geq 1$.

Thus, for $n>UV$, we can write 
\begin{equation}
    \Lambda_F(n) = a_1(n) + a_2(n) + a_3(n) + a_4(n),\label{eq:split into Type I II sums}
\end{equation}
where
\begin{align*}
a_1(n) &= \begin{cases} \Lambda_F(n) & \text{if } n \leq U, \\ 0 & \text{otherwise}, \end{cases} \\
a_2(n) &=- \sum_{\substack{n = m l \\ m \leq V}} \mu_F(m) A_F(l, 1) \log l, \\
a_3(n) &=- \sum_{n = m l} \psi_F(m) A_F(l, 1), \quad \text{with} \quad \psi_F(m) = \sum_{\substack{m = m_1 m_2 \\ m_1 \leq U, \, m_2 \leq V}} \Lambda_F(m_1) \mu_F(m_2), \\
a_4(n) &= \sum_{\substack{n = m l \\ m > U, \, l > 1}} \Lambda_F(m) \varphi_F(l), \quad \text{with} \quad \varphi_F(l) = \sum_{\substack{l = l_1 l_2 \\ n_1 \leq V}} \mu_F(l_1) A_F(l_2, 1).
\end{align*}

Notice that if $l \leq V$, then $\varphi_F(l) = \sum_{l = l_1 l_2} \mu_F(l_1) A_F(l_2, 1)$, which is the coefficient of $l^{-s}$ in $\frac{1}{L(s, F)} L(s, F) = 1$. Therefore,
\begin{equation*}
\varphi_F(l) = 0 \quad \text{for } 2 \leq l \leq V.
\end{equation*}

\section{Linear case}
\subsection{Preliminaries}
\begin{lemma}\label{lem:linear_bilinear_1_4}
Let $M, N \geq 2$ and $X \asymp MN$. Let $\{a(m): m \sim M\}$ and $\{b(n): n \sim N\}$ be two arbitrary sequences of complex numbers satisfying the mean-value bounds
\begin{equation*}
\label{eq:l2_bounds}
\sum_{m \sim M} |a(m)|^2 \ll M \log^{c_0} M \qquad \text{and} \qquad \sum_{n \sim N} |b(n)|^2 \ll N \log^{c_0} N
\end{equation*}
for some constant $c_0 > 0$. Suppose $\alpha \in \mathbb{R}$ satisfies the Dirichlet rational approximation
\begin{equation*}
\left| \alpha - \frac{a}{q} \right| \leq \frac{1}{q^2}, \qquad (a, q) = 1, \qquad q \geq 1.
\end{equation*}
Then, we have 
\begin{equation}\label{eq:bilinear_1_4_bound}
S := \sum_{m \sim M} \sum_{n \sim N} a(m) b(n) e(\alpha mn) \ll X^{1+\varepsilon}\left( \frac{1}{M} + \frac{1}{N} + \frac{1}{q} + \frac{q}{X} \right)^{1/4}.
\end{equation}
\end{lemma}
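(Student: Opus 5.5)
The plan is the standard Cauchy--Schwarz argument for bilinear forms, without the usual bounded-coefficient assumption; the $L^2$ hypotheses are all that is needed.

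First apply Cauchy--Schwarz in $m$:
\begin{equation*}
|S|^2 \le \Bigl(\sum_{m\sim M}|a(m)|^2\Bigr)\sum_{m\sim M}\Bigl|\sum_{n\sim N} b(n)e(\alpha mn)\Bigr|^2 .
\end{equation*}
Expanding the square and extending $m$ to all integers in $(M,2M]$ gives
\begin{equation*}
|S|^2 \ll M\log^{c_0}M \sum_{n_1,n_2\sim N} |b(n_1)b(n_2)|\,\min\Bigl(M,\frac{1}{\|\alpha(n_1-n_2)\|}\Bigr).
\end{equation*}
Next, use $|b(n_1)b(n_2)|\le \tfrac12(|b(n_1)|^2+|b(n_2)|^2)$ and symmetry. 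This reduces the double sum to
\begin{equation*}
\sum_{n_1\sim N}|b(n_1)|^2 \sum_{|h|\le N}\min\Bigl(M,\frac{1}{\|\alpha h\|}\Bigr).
\end{equation*}
Here $h=n_1-n_2$, and for fixed $n_1$ each $h$ occurs at most once. The key point is that only the $L^2$ norm of $b$ now appears, so no pointwise bound on $b(n)$ is required.

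For the inner sum, invoke the classical estimate valid under $|\alpha-a/q|\le q^{-2}$:
\begin{equation*}
\sum_{h\le H}\min\Bigl(M,\frac{1}{\|\alpha h\|}\Bigr)\ll \Bigl(M+\frac{HM}{q}+H+q\Bigr)\log(2qH).
\end{equation*}
This is obtained by splitting $h$ into blocks of length $q$ and using the spacing of the points $\alpha h$ modulo $1$ within each block; the $h=0$ term contributes $M$. Taking $H=N$ and inserting $\sum|b|^2\ll N\log^{c_0}N$ gives
\begin{equation*}
|S|^2\ll X^{\varepsilon} MN\Bigl(M+\frac{MN}{q}+N+q\Bigr)=X^{2+\varepsilon}\Bigl(\frac1N+\frac1q+\frac1M+\frac{q}{X}\Bigr).
\end{equation*}
This already yields the exponent $1/2$, which is stronger than the claimed exponent $1/4$. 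Since the bracket is always $\ll 1$ (indeed $S\ll X^{1+\varepsilon}$ trivially when the bracket exceeds $1$), the square-root bound implies \eqref{eq:bilinear_1_4_bound}.

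The main obstacle is the step from the $b(n_1)b(n_2)$ sum to the single $h$-sum, which must be done without the usual assumption of bounded coefficients. The AM--GM symmetrization above handles this cleanly. If one prefers to follow the paper's route, an alternative is a second Cauchy--Schwarz over $(n_1,n_2)$, which costs another square root and produces exactly the stated $1/4$ power. In that version one bounds $\sum_{h}\bigl|\sum_m e(\alpha mh)\bigr|^2$ in the same way, and it is this second Cauchy--Schwarz that loses the extra square root. Either route gives the lemma; the $X^{\varepsilon}$ absorbs all logarithmic factors.
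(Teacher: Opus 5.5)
Your proof is correct and in fact proves more than the lemma claims. The difference from the paper is in how the off-diagonal terms are handled after the first Cauchy--Schwarz; the paper applies that first step in $n$ rather than $m$, which is immaterial.

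The paper expands in $m_1,m_2$ and then applies two further Cauchy--Schwarz inequalities: over $m$, and then over $h$ against the weights $|a(m+h)|$. This leaves $\bigl(\sum_{|h|<M}|\sum_{n\sim N}e(\alpha hn)|^2\bigr)^{1/2}$. Bounding that by $\bigl(N\sum_{|h|<M}\min(N,\|\alpha h\|^{-1})\bigr)^{1/2}$ costs an extra square root, which is why the paper only reaches the exponent $1/4$.

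Your symmetrization $|b(n_1)b(n_2)|\le\frac12(|b(n_1)|^2+|b(n_2)|^2)$, together with $\|{-x}\|=\|x\|$, bounds the double sum by $\sum_{n\sim N}|b(n)|^2\sum_{|h|<N}\min(M,\|\alpha h\|^{-1})$ without loss. This gives the classical Vinogradov-type bound $S\ll X^{1+\varepsilon}(\frac1M+\frac1N+\frac1q+\frac qX)^{1/2}$.

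Your deduction of the stated bound is also fine. When the bracket exceeds $1$, the trivial bound $|S|\le\sum|a|\sum|b|\ll X\log^{c_0}X$ suffices. That case includes $q>X$, where the factor $\log(2qN)$ would otherwise not be $\ll X^{\varepsilon}$. When the bracket is at most $1$, the square root is smaller than the fourth root.

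The paper's route gains nothing in return for the lost square root. Inserting your bound into the paper's final optimisation for Theorem~\ref{thm:linear} makes the Type~II term $X^{1+\varepsilon}U^{-1/2}$ instead of $X^{1+\varepsilon}U^{-1/4}$. Balancing it against $X^{1/2+\rho}U$ at $U=X^{(1-2\rho)/3}$ would improve the exponent $\frac{9+2\rho}{10}$ to $\frac{5+2\rho}{6}$, i.e.\ from $\frac{19}{20}$ to $\frac{11}{12}$ when $\rho=\frac14$.
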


\begin{proof}
Without loss of generality, we may assume $q \leq X$; otherwise, the right-hand side of \eqref{eq:bilinear_1_4_bound} exceeds the trivial bound $\ll X \log^{c_0} X$ coming from Cauchy--Schwarz.

Applying the Cauchy--Schwarz inequality across the $n$-variable to separate $b(n)$ gives
\begin{align*}
|S|^2 \leq &\left( \sum_{n \sim N} |b(n)|^2 \right) \sum_{n \sim N} \left| \sum_{m \sim M} a(m) e(\alpha mn) \right|^2 \\
\ll& N \log^{c_0} N \sum_{n \sim N} \sum_{m_1 \sim M} \sum_{m_2 \sim M} a(m_1) \overline{a(m_2)} e\left(\alpha (m_1 - m_2)n\right)\\
\ll &N \log^{c_0} N \sum_{m \sim M} |a(m)| \sum_{|h| < M} |a(m+h)| \left| \sum_{n \sim N} e(\alpha hn) \right|
\end{align*}

We now apply the Cauchy--Schwarz inequality to the $m$-summation
\begin{align*}
|S|^2 \ll&M^{1/2}N \log^{c_0}N \log^{c_0/2} \left(\sum_{m \sim M} \left( \sum_{|h| < M} |a(m+h)| \left| \sum_{n \sim N} e(\alpha hn) \right| \right)^2\right)^{1/2}.
\end{align*}
Applying Cauchy--Schwarz to the inner $h$-sum with the weights $|a(m+h)|$ and noting that $\sum_{|h| < M} |a(m+h)|^2 \ll M \log^{c_0} M$ uniformly in $m$, we get
\begin{align*}
|S|^2 \ll&M^{3/2}N \log^{c_0} N \log^{3c_0/2}  \left(\sum_{|h| < M} \left| \sum_{n \sim N} e(\alpha hn) \right|^2\right)^{1/2}.
\end{align*}

Because the phase function is purely linear in $n$, the inner sum is a geometric series and using the standard reciprocal sum estimate
\begin{equation*}
    \sum_{n\sim N}e(\alpha hn)\ll \min\left(N, \, \frac{1}{\|\alpha h\|}\right),
\end{equation*}
we get
\[
\sum_{|h| < M} \min\left(N, \, \frac{1}{\|\alpha h\|}\right) \ll \left(M + N + q + \frac{MN}{q}\right) \log(2q).
\]
Substituting this estimate, we derive
\[
|S| \ll (MN)^{3/4} \left(M + N + q + \frac{MN}{q}\right)^{1/4} \log^{c_1} MN\ll 
X^{1+\varepsilon} \left( \frac{1}{M} + \frac{1}{N} + \frac{1}{q} + \frac{q}{X} \right)^{1/4}.\]
\end{proof}

\begin{lemma}\label{lem:unified_rankin_selberg}
Let $F$ be a Hecke--Maass form for $\mathrm{SL}(3, \mathbb{Z})$, and let $A_F(n, 1)$, $\mu_F(n)$, and $\Lambda_F(n)$ denote the arithmetic coefficients defined as above. Then for any $x \geq 2$, there exists a positive constant $c > 0$ such that
\begin{align}
\sum_{n \leq x} |\mu_F(n)|^2 &= cx + O\left(x^{\frac{6}{7}+ \varepsilon}\right) , \label{eq:muF_mean_square} \\
\sum_{n \leq x} d(n) |\mu_F(n)|^2 &\ll x \log x, \label{eq:d_muF_mean_square} \\
\sum_{n \leq x} d(n) |A_F(n, 1)|^2 &\ll x \log x, \label{eq:d_AF_mean_square} \\
\sum_{n \leq x} d(n) |\Lambda_F(n)|^2 &\ll x \log^3 x, \label{eq:d_LambdaF_mean_square}
\end{align}
where the implied constants depend only on the form $F$ and $ \varepsilon > 0$.
\end{lemma}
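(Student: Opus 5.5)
The plan is to work with Dirichlet series and compare each one with a product of Rankin--Selberg $L$-functions, since those have known analytic properties. The problem only enters through Euler factors at prime powers $p^k$ with $k\ge 2$, where the Kim--Sarnak bound $|\alpha_F(p,j)|\le p^{5/14}$ is too weak to give absolute convergence near $\Re s=1$ by itself.

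\textbf{Step 1: the asymptotic \eqref{eq:muF_mean_square}.} Set $D(s)=\sum_n |\mu_F(n)|^2 n^{-s}$. By multiplicativity,
\begin{equation*}
D(s)=\prod_p\Bigl(1+\frac{|A_F(p,1)|^2}{p^s}+\frac{|A_F(1,p)|^2}{p^{2s}}+\frac{1}{p^{3s}}\Bigr).
\end{equation*}
I would write $D(s)=L(s,F\times\widetilde F)\,G(s)$. Here I use $L(s,F\times \widetilde F)$, whose $p^{-s}$ coefficient is $|A_F(p,1)|^2$. Then $G(s)=\prod_p(1+O(p^{-2s}\cdot\text{terms in }\alpha_F))$, so the $p^{-s}$ terms cancel.

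\begin{itemize}
\item The $p^{-2s}$ coefficient of $G$ is a polynomial of degree $4$ in the Satake parameters, hence $\ll p^{20/14}$. So the Euler product for $G$ converges absolutely for $\Re s>1/2+5/14=6/7$. To keep this clean I would bound $G$ using the coefficientwise bound $|\alpha_F(p,j)|\le p^{5/14}$ applied to all higher coefficients.
\item Perron's formula then gives the main term $cx$, with $c=\operatorname{Res}_{s=1}L(s,F\times\widetilde F)\,G(1)>0$.
\item For the error term, shift the contour to $\Re s=6/7+\varepsilon$. Convexity bounds for $L(s,F\times\widetilde F)$ on that line, together with a suitable truncation, should give $O(x^{6/7+\varepsilon})$.
\end{itemize}

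\textbf{Step 2: the divisor-weighted bounds \eqref{eq:d_muF_mean_square}--\eqref{eq:d_LambdaF_mean_square}.} I would use the same comparison for $\sum d(n)|c(n)|^2n^{-s}$. At primes the coefficient is $2|A_F(p,1)|^2$, so this series is $L(s,F\times\widetilde F)^2 G_2(s)$. The factor $G_2$ is again absolutely convergent for $\Re s>6/7$ and holomorphic near $s=1$. The double pole at $s=1$ gives $\ll x\log x$; since only an upper bound is needed, one can even use Rankin's trick with $\sigma=1+1/\log x$.

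\begin{itemize}
\item For $A_F(n,1)$, the prime-square coefficients are $|A_F(p^2,1)|^2$, which is again a polynomial in the Satake parameters, so the same argument applies.
\item For $\Lambda_F$, the function is supported on prime powers. Its prime part is
\begin{equation*}
\sum_{p\le x}2(\log p)^2|A_F(p,1)|^2\ll x\log x,
\end{equation*}
obtained by partial summation from the prime-number-theorem-type bound $\sum_{p\le x}|A_F(p,1)|^2\log p\ll x$. The latter follows from $-\frac{d}{ds}\log L(s,F\times\widetilde F)$ having a simple pole at $s=1$ and nonnegative coefficients there. The prime powers with $k\ge2$ contribute $\ll x^{29/42+\varepsilon}$, as in the Remark. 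This stays well within the claimed $x\log^3x$.
\end{itemize}

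\textbf{Main obstacle.} The hard part is the higher prime powers under Kim--Sarnak. One must check that $G$ and $G_2$ converge absolutely in a neighbourhood of $\Re s\ge1$ and down to $6/7$, with all coefficients $|A_F(p^k,1)|^2\ll p^{5k/7+\varepsilon}$ for $k\ge 2$. This is exactly where the exponent $6/7=1/2+5/14$ comes from. The next most delicate point is justifying the contour shift for the $O(x^{6/7+\varepsilon})$ error term, which needs polynomial growth of $L(s,F\times\widetilde F)$ in vertical strips. If that is too costly, I would settle for a weaker error term.
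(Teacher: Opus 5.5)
Your route is the same as the paper's: factor through $L(s,F\times\widetilde F)$ and its square, read off the simple or double pole at $s=1$, and split $\Lambda_F$ into primes and prime powers. But the step you single out as the main obstacle is resolved incorrectly, so the argument does not go through as written.

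\textbf{The gap.} A $p^{-2s}$ coefficient of size $p^{20/14}$ makes $\sum_p p^{20/14-2\sigma}$ converge only for $\sigma>\frac12+\frac57=\frac{17}{14}$, not for $\sigma>\frac12+\frac5{14}$. More generally, the pointwise bound $p^{5k/7}$ for the coefficient of $p^{-ks}$ gives absolute convergence only for $\sigma>\frac57+\frac1k$. This exceeds $1$ for $k=2$ and $k=3$ (it equals $\frac{17}{14}$ and $\frac{22}{21}$). So your justification does not even show that $G$ and $G_2$ are holomorphic at $s=1$. Neither the main term $cx$ nor the $x\log x$ bounds follow from it.

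The same problem appears in your treatment of $\Lambda_F$. The Remark's $x^{29/42+\varepsilon}$ bounds the first moment $\sum(|A_F(p^k,1)|+|a_F(p^k)|)$. You need the second moment $\sum(k+1)|a_F(p^k)|^2\log^2p$ instead. With pointwise Kim--Sarnak, its $k=2$ and $k=3$ terms are only $\ll x^{17/14}$ and $\ll x^{22/21}$, both larger than $x\log^3x$.

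\textbf{The missing idea.} Use only one factor of Kim--Sarnak and let the Rankin--Selberg mean value $\sum_{p\le y}|A_F(p,1)|^2\ll y$ absorb the rest. Write $\lambda_{F\times\widetilde F}(n)$ for the coefficients of $L(s,F\times\widetilde F)$.
\begin{itemize}
\item By the Cauchy identity, $\lambda_{F\times\widetilde F}(p^2)=|A_F(p^2,1)|^2+|A_F(1,p)|^2$. Hence the $p^{-2s}$ coefficient of $G$ is exactly $-|A_F(p^2,1)|^2$.
\item Moreover $|A_F(p^2,1)|^2\ll|A_F(p,1)|^4+|A_F(p,1)|^2\ll p^{5/7}|A_F(p,1)|^2+|A_F(p,1)|^2$. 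Partial summation then gives convergence for $2\sigma>1+\frac57$, i.e.\ $\sigma>\frac67$. This averaging, not a pointwise bound, is the real source of the exponent $6/7$.
\item At $p^{-3s}$ use $|A_F(p,1)|^6\ll p^{10/7}|A_F(p,1)|^2$ in the same way, which gives $\sigma>\frac{17}{21}$.
\item These are precisely the mixed estimates the paper writes out for the prime-power part of $\Lambda_F$: $x^{6/7+\varepsilon}$ for $k=2$, $x^{17/21+\varepsilon}$ for $k=3$, and a pointwise $x^{27/28+\varepsilon}$ for $k\ge4$.
\item For $k\ge4$, pointwise bounds already give holomorphy near $\Re s=1$. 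To reach all of $\Re s>\frac67$ you also need unitarity. It forces the Satake moduli at $p$ to be $p^{t},1,p^{-t}$ with $t\le\frac5{14}$, so the coefficients of $L_p(s,F\times\widetilde F)^{-1}$ are $\ll p^{4t}$.
\end{itemize}

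\textbf{The error term.} Your contour shift, using convexity for the degree-$9$ function $L(s,F\times\widetilde F)$ on $\Re s=\frac67+\varepsilon$, only balances at about $x^{21/23}$. Settling for that would not prove the stated error term. Instead, write $|\mu_F|^2=\lambda_{F\times\widetilde F}*g$ with $\sum_m|g(m)|m^{-6/7-\varepsilon}<\infty$. Then insert a Landau-type asymptotic $\sum_{n\le y}\lambda_{F\times\widetilde F}(n)=c'y+O(y^{4/5})$, which holds because the coefficients are nonnegative and the degree is $9$. Summing over $m$ gives $cx+O(x^{6/7+\varepsilon})$ with $c=c'G(1)>0$.
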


\begin{proof}
Consider the Dirichlet series attached to $|\mu_F(n)|^2$ and $d(n)|\mu_F(n)|^2$
\begin{equation*}
D(\mu_F, s) := \sum_{n=1}^\infty \frac{|\mu_F(n)|^2}{n^s} \quad \text{and} \quad D_d(\mu_F, s) := \sum_{n=1}^\infty \frac{d(n)|\mu_F(n)|^2}{n^s}.
\end{equation*}
Since $\mu_F(n)$ is multiplicative and supported on cubefree integers ($p^4 \nmid n$), for $\Re s > 1$ we have the Euler products
\begin{align*}
D(\mu_F, s) &= \prod_p \left( 1 + \frac{|\mu_F(p)|^2}{p^s} + \frac{|\mu_F(p^2)|^2}{p^{2s}} + \frac{|\mu_F(p^3)|^2}{p^{3s}} \right) , \\
D_d(\mu_F, s) &= \prod_p \left( 1 + \frac{2|\mu_F(p)|^2}{p^s} + \frac{3|\mu_F(p^2)|^2}{p^{2s}} + \frac{4|\mu_F(p^3)|^2}{p^{3s}} \right).
\end{align*}
Using the values $\mu_F(p) = -A_F(p, 1)$, $\mu_F(p^2) = A_F(1, p)$, and $\mu_F(p^3) = -1$, we factorize these series by extracting the dominant factors from $L(s, F \times F)$ as
\begin{equation*}
D(\mu_F, s) = L(s, F \times F) U_1(s) \quad \text{and} \quad D_d(\mu_F, s) = L^2(s, F \times F) U_2(s) .
\end{equation*}
By the Kim--Sarnak bound $|\alpha_F(p, j)| \leq p^{5/14}$, the Euler products $U_1(s)$ and $U_2(s)$ converge absolutely in the half-plane $\Re s > 6/7$. 

Because $L(s, F \times F)$ has a simple pole at $s = 1$, $D(\mu_F, s)$ has a simple pole at $s = 1$, and standard contour integration yields the asymptotic formula \eqref{eq:muF_mean_square}. Meanwhile, $D_d(\mu_F, s)$ has a double pole at $s = 1$, which immediately yields the upper bound \eqref{eq:d_muF_mean_square}.

Similarly, for the Dirichlet series $$D_d(A_F, s) := \sum_{n=1}^\infty d(n)|A_F(n, 1)|^2 n^{-s},$$ the Euler product decomposition gives
\begin{equation*}
D_d(A_F, s) = L^2(s, F \times F) U_3(s),
\end{equation*}
where $U_3(s)$ converges absolutely in $\Re s > 6/7$. The pole of order $2$ at $s = 1$ leads directly to \eqref{eq:d_AF_mean_square}.

To establish \eqref{eq:d_LambdaF_mean_square}, we separate the sum into primes and higher prime powers as
\begin{align*}
\sum_{n \leq x} d(n) |\Lambda_F(n)|^2& = 2 \sum_{p \leq x} |A_F(p, 1)|^2 \log^2 p + \sum_{k \geq 2} \sum_{p^k \leq x} (k+1)|a_F(p^k)|^2 \log^2 p 
\end{align*}
Recall that $a_F(p^2)=A_F(p,1)^2-2A_F(1,p)$ and $a_F(p^3)=A_F(p,1)^3-3A_F(p,1)A_F(1,p)+3$. Using the Kim-Sarnak bound $A_F(n,1)\ll n^{5/14+\varepsilon}$, gives 
\begin{align*}
    \sum_{p\leq x^{1/2}}|a_F(p^2)|^2\log^2p&\ll x^\varepsilon \sum_{p\leq x^{1/2}}|A_F(p,1)|^4+x^\varepsilon\sum_{p\leq x^{1/2}}|A_F(1,p)|^2\ll x^{6/7+\varepsilon}, \\
    \sum_{p\leq x^{1/3}}|a_F(p^3)|^2\log^2p&\ll x^\varepsilon \sum_{p\leq x^{1/3}}|A_F(p,1)|^6+x^\varepsilon\sum_{p\leq x^{1/3}}|A_F(p,1)|^2|A_F(1,p)|^2+x^{1/3+\varepsilon} \\
    &\ll x^{17/21+\varepsilon}+x^{12/21+\varepsilon}+x^{1/3+\varepsilon}\\
    &\ll x^{17/21+\varepsilon},
\end{align*}
and 
\begin{align*}
    \sum_{k\geq 4}\sum_{p^k\leq x}|a_F(p^k)|^2\log^2p\ll \sum_{k\geq 4}\sum_{p^k\leq x}p^{5k/7+\varepsilon}\ll \sum_{k\geq 4}x^{\frac{5}{7}+\frac{1}{k}+\varepsilon}\ll x^{27/28+\varepsilon}.
\end{align*}
Since $\sum_{p\leq x}|A_F(p,1)|^2\log^2p\ll x\log^3x$ and $x^{\frac{27}{28}+ \varepsilon} = o(x \log^3 x)$, the main contribution yields \eqref{eq:d_LambdaF_mean_square}.
\end{proof}

\begin{lemma}\label{lem:mean-square-estimates}
Let $F$ be a Hecke--Maass form for $\mathrm{SL}(3, \mathbb{Z})$ with normalized Fourier coefficients $A_F(n, 1)$. Let $\Lambda_F(m)$, $\psi_F(m)$, and $\varphi_F(l)$ be the coefficients defined as above. Then for any $X \geq 2$, we have the uniform bounds
\begin{align}
\sum_{m \leq X} |\Lambda_F(m)|^2 &\ll X \log X, \label{eq:lem-LambdaF} \\
\sum_{m \leq X} |\psi_F(m)|^2 &\ll X \log^3 X, \label{eq:lem-aF} \\
\sum_{l \leq X} |\varphi_F(l)|^2 &\ll X \log^3 X . \label{eq:lem-bF}
\end{align}
\end{lemma}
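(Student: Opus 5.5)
We prove the three bounds in the order stated, since the bound \eqref{eq:lem-LambdaF} for $\Lambda_F$ feeds into the bound for $\psi_F$. Throughout we use only the Rankin--Selberg input, the Kim--Sarnak bound, and Lemma~\ref{lem:unified_rankin_selberg}.

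\textbf{The bound \eqref{eq:lem-LambdaF}.} We split $\sum_{m\le X}|\Lambda_F(m)|^2$ into primes and higher prime powers.
\begin{itemize}
\item \emph{Higher prime powers.} Exactly as in the proof of \eqref{eq:d_LambdaF_mean_square}, they contribute $\ll X^{27/28+\varepsilon}$.
\item \emph{Primes.} The prime part is $\sum_{p\le X}|A_F(p,1)|^2\log^2 p$. Here $|A_F(p,1)|^2\log p$ is the prime part of the von Mangoldt function $\Lambda_{F\times \tilde F}$ of $L(s,F\times\tilde F)$, which is the coefficient of $-\frac{L'}{L}(s,F\times\tilde F)$.
\item \emph{Prime number theorem input.} $L(s,F\times\tilde F)$ has a simple pole at $s=1$ and is non-vanishing on $\Re s=1$ (Shahidi). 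Hence the Rankin--Selberg prime number theorem gives $\sum_{p\le y}|A_F(p,1)|^2\log p\ll y$.
\item \emph{Conclusion.} Partial summation against the extra factor $\log p$ gives $\ll X\log X$.
\end{itemize}
This is the step where one cannot simply quote \eqref{eq:d_LambdaF_mean_square}, which loses two logarithms. The Rankin--Selberg prime number theorem is the genuinely needed input, and I expect this to be the main point to justify carefully.

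\textbf{The bound \eqref{eq:lem-aF}.} We have $\psi_F(m)=\sum_{m=m_1m_2,\,m_1\le U,\,m_2\le V}\Lambda_F(m_1)\mu_F(m_2)$, and only prime powers $m_1\mid m$ contribute.
\begin{itemize}
\item \emph{Cauchy--Schwarz.} The number of prime-power divisors of $m$ is $\le\Omega(m)\ll\log m$. Applying Cauchy--Schwarz over these divisors gives
\[
|\psi_F(m)|^2\ll \log X\sum_{m_1m_2=m}|\Lambda_F(m_1)|^2|\mu_F(m_2)|^2 .
\]
\item \emph{Summing over $m\le X$.} Drop the conditions $m_1\le U$, $m_2\le V$ (the terms are nonnegative) and use \eqref{eq:muF_mean_square} in the weak form $\sum_{m_2\le y}|\mu_F(m_2)|^2\ll y$. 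This gives
\[
\sum_{m\le X}|\psi_F(m)|^2\ll \log X\sum_{m_1\le X}|\Lambda_F(m_1)|^2\frac{X}{m_1}.
\]
\item \emph{Conclusion.} By partial summation from \eqref{eq:lem-LambdaF}, $\sum_{m_1\le X}|\Lambda_F(m_1)|^2/m_1\ll\log^2X$. The total is therefore $\ll X\log^3X$.
\end{itemize}

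\textbf{The bound \eqref{eq:lem-bF}.} The summation condition in $\varphi_F$ should read $l_1\le V$, and dropping it only enlarges the nonnegative sums below.
\begin{itemize}
\item \emph{Cauchy--Schwarz.} Over the $d(l)$ factorizations $l=l_1l_2$,
\[
|\varphi_F(l)|^2\le d(l)\sum_{l_1l_2=l}|\mu_F(l_1)|^2|A_F(l_2,1)|^2 .
\]
\item \emph{Separating variables.} Since $d(l_1l_2)\le d(l_1)d(l_2)$, we get
\[
\sum_{l\le X}|\varphi_F(l)|^2\le\sum_{l_1\le X}d(l_1)|\mu_F(l_1)|^2\sum_{l_2\le X/l_1}d(l_2)|A_F(l_2,1)|^2 .
\]
\item \emph{Inner sum.} By \eqref{eq:d_AF_mean_square}, the inner sum is $\ll (X/l_1)\log X$.
\item \emph{Outer sum.} By \eqref{eq:d_muF_mean_square} and partial summation, $\sum_{l_1\le X}d(l_1)|\mu_F(l_1)|^2/l_1\ll\log^2X$.
\item \emph{Conclusion.} Combining the two gives $\ll X\log^3X$, as claimed.
\end{itemize}
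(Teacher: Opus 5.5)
Your proof is correct. For \eqref{eq:lem-bF} it is the paper's argument verbatim, but for the first two bounds you take a different and tighter route.

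\textbf{The bound for $\Lambda_F$.} The paper uses only the Rankin--Selberg mean square $\sum_{p\le X}|A_F(p,1)|^2\ll X$ and pulls out $\log^2 X$ trivially. That gives $X\log^2 X$, one logarithm more than the stated $X\log X$. You instead invoke the Rankin--Selberg prime number theorem for $L(s,F\times\tilde F)$: nonnegative coefficients $\Lambda_{F\times\tilde F}(p^\ell)=|a_F(p^\ell)|^2\log p$, a simple pole at $s=1$, Shahidi's nonvanishing on $\Re s=1$, and a Tauberian argument. This is exactly what is needed to reach $X\log X$. The cost is a deeper input. The paper's cruder bound would still suffice for Theorems~\ref{thm:linear} and~\ref{thm:non-linear}, where any fixed power of $\log X$ is absorbed into $X^{\varepsilon}$.

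\textbf{The bound for $\psi_F$.} The paper applies Cauchy--Schwarz with the full weight $d(m)$. It then needs the divisor-weighted bounds \eqref{eq:d_LambdaF_mean_square} and \eqref{eq:d_muF_mean_square}. You instead use that $\Lambda_F$ is supported on prime powers, so at most $\Omega(m)\ll\log X$ terms occur. You then need only the unweighted bounds \eqref{eq:muF_mean_square} and \eqref{eq:lem-LambdaF}.

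Your bookkeeping genuinely closes at $X\log^3X$, given your sharper first bound. The paper's route does not: feeding \eqref{eq:d_LambdaF_mean_square} as stated ($\ll x\log^3 x$) into it gives, by partial summation, $\sum_{m_1\le X}d(m_1)|\Lambda_F(m_1)|^2/m_1\ll\log^4X$. That yields only $X\log^5X$. Your route avoids the divisor weights entirely and spends just one logarithm on the Cauchy--Schwarz step.
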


\begin{proof}
Since $\Lambda_F(m)$ is supported only on prime powers, we split the sum as
\begin{equation*}
\sum_{m \leq X} |\Lambda_F(m)|^2 = \sum_{p \leq X} |\Lambda_F(p)|^2 + \sum_{\substack{p^k \leq X \\ k \geq 2}} |\Lambda_F(p^k)|^2 .
\end{equation*}
For $k = 1$, we have $\Lambda_F(p) = A_F(p, 1) \log p$. Hence, by partial summation again,
\begin{equation*}
\sum_{p \leq X} |\Lambda_F(p)|^2 \ll \log^2 X \sum_{p \leq X} |A_F(p, 1)|^2 \ll X \log^2 X.
\end{equation*}
For $k \geq 2$, applying the Kim--Sarnak bound gives $|\Lambda_F(p^k)| \ll p^{\frac{5k}{14}} \log p$. The contribution from squares ($k = 2$) and powers $k \geq 3$ satisfies
\begin{equation*}
\sum_{p^2 \leq X} |\Lambda_F(p^2)|^2 + \sum_{k \geq 3} \sum_{p^k \leq X} |\Lambda_F(p^k)|^2 \ll X^{6/7} \log^2 X + X^{27/28+ \varepsilon} \ll X^{27/28+ \varepsilon} .
\end{equation*}
Because $X^{27/28+ \varepsilon} = o(X \log X)$, the prime contribution dominates, yielding \eqref{eq:lem-LambdaF}.

By Cauchy--Schwarz on the convolution definition of $\psi_F(m)$, we have
\begin{equation*}
|\psi_F(m)|^2 \leq d(m) \sum_{m = m_1 m_2} |\Lambda_F(m_1)|^2 |\mu_F(m_2)|^2 .
\end{equation*}
Summing over $m \leq X$ gives
\begin{align*}
\sum_{m \leq X} |\psi_F(m)|^2 &\ll \sum_{m_1 \leq X} d(m_1) |\Lambda_F(m_1)|^2 \sum_{m_2 \leq \frac{X}{m_1}} d(m_2) |\mu_F(m_2)|^2\\
&\ll X \log X \sum_{m_1 \leq X} \frac{d(m_1) |\Lambda_F(m_1)|^2}{m_1} \\
&\ll X \log^3 X.
\end{align*}
The definition $\varphi_F(l)$ and Cauchy--Schwarz implies
\begin{equation*}
|\varphi_F(l)|^2 \leq d(l) \sum_{l = l_1 l_2} |\mu_F(l_1)|^2 |A_F(l_2, 1)|^2.
\end{equation*}
Summing over $l \leq X$ gives
\begin{align*}
\sum_{l \leq X} |\varphi_F(l)|^2 &\ll  \sum_{l_1 \leq X} d(l_1) |\mu_F(l_1)|^2 \sum_{l_2 \leq \frac{X}{l_1}} d(l_2) |A_F(l_2, 1)|^2 \\
&\ll X \log X \sum_{l_1 \leq X} \frac{d(l_1) |\mu_F(l_1)|^2}{l_1} \\
&\ll  X \log^3 X. 
\end{align*}
\end{proof}

\subsection{Proof of Theorem \ref{thm:linear}}
Multiplying \eqref{eq:split into Type I II sums} by $e(n\alpha)$ and adding up to $n\leq X$ gives
\begin{equation*}
S(F, X) \coloneqq \sum_{n \leq X} \Lambda_F(n) e(n\alpha) = S_1 + S_2 + S_3 + S_4 + O\bigl((UV)^{1+\varepsilon}\bigr),
\end{equation*}
where
\begin{align*}
S_1 &= \sum_{n \leq U} \Lambda_F(n) e(n\alpha), \\
S_2 &= \sum_{m \leq V} \mu_F(m) \sum_{l \leq X/m} A_F(l, 1) \log l \, e(ml\alpha), \\
S_3 &= \sum_{m \leq UV} \psi_F(m) \sum_{l \leq X/m} A_F(l, 1) e(ml\alpha), \\
S_4 &= \sum_{\substack{m > U, \, l > V \\ ml \leq X}} \Lambda_F(m) \varphi_F(l) e(ml\alpha).
\end{align*}
We know that
\begin{equation*}
\sum_{n \leq X} A_F(n, 1) e(n\alpha) \ll_{F, \varepsilon} X^{1/2+\rho + \varepsilon},
\end{equation*}
holds with $\rho=1/4$, and by partial summation, we obtain
\begin{equation*}
\sum_{n \leq X} A_F(n, 1) e(n\alpha) \log n \ll_{F, \varepsilon} X^{1/2+\rho + \varepsilon}.
\end{equation*}

Now, we estimate each sum
\begin{align*}
S_1 &\ll U \log^2U, \\
S_2 &\ll \sum_{m \leq V} |\mu_F(m)| \left( \frac{X}{m} \right)^{1/2+\rho + \varepsilon} 
\ll X^{1/2+\rho + \varepsilon} \Biggl( \sum_{m \leq V} |\mu_F(m)|^2 \Biggr)^{1/2} 
\ll X^{1/2+\rho + \varepsilon} V^{1/2} (\log V)^{1/2}, \\
S_3 &\ll \sum_{m \leq UV} |\psi_F(m)| \left( \frac{X}{m} \right)^{1/2+\rho + \varepsilon} 
\ll X^{1/2+\rho + \varepsilon} \Biggl( \sum_{m \leq UV} |\psi_F(m)|^2 \Biggr)^{1/2} \\
&\ll X^{1/2+\rho + \varepsilon} (UV)^{1/2} (\log UV)^{1/2}.
\end{align*}
For $S_4$, by dyadic decomposition, we can split it into $\ll \log^2 X$ sums of the form
\begin{equation*}
\sum_{m \sim M} \sum_{l \sim L} \varphi_F(l) \Lambda_F(m) e(ml\alpha),
\end{equation*}
where $U < M \ll X/V$, $V < L \ll X/U$, $UV < X$, and $ML \sim X$.

By standard estimates for bilinear sums, we obtain
\begin{equation*}
S_4 \ll X^{1+\varepsilon} \left( \frac{1}{M} + \frac{1}{L} + \frac{q}{X} + \frac{1}{q} \right)^{1/4} .
\end{equation*}
Thus, combining all bounds gives
\begin{align*}
S(F, X) &\ll  (UV)^{1+\varepsilon}+ X^{1/2+\rho + \varepsilon} (UV)^{1/2 + \varepsilon} + X^{1 + \varepsilon} \left( \frac{1}{U} + \frac{1}{V} + \frac{q}{X} + \frac{1}{q} \right)^{1/4}.
\end{align*}
Since $\alpha$ is of type $1$, for any $Q$, we can choose $q$ such that $Q^{1 - \varepsilon} < q \leq Q$. Then
\begin{equation*}
S(F, X) \ll (UV)^{1+\varepsilon} + X^{1/2+\rho + \varepsilon} (UV)^{1/2 + \varepsilon} + X^{1 + \varepsilon} \left( \frac{1}{U} + \frac{1}{V} + \frac{Q}{X} + \frac{1}{Q} \right)^{1/4}.
\end{equation*}
We choose $Q = X^{1/2}$ and $U = V$. Then
\begin{equation*}
S(F, X) \ll U^{2+\varepsilon} + X^{1/2+\rho + \varepsilon} U^{1 + \varepsilon} + X^{1 + \varepsilon} \left( \frac{1}{U} \right)^{1/4}.
\end{equation*}
Balancing $x^{1/2+\rho } U = X U^{-1/4}$, and choosing $U = X^{2/5-4\rho/5}$, implies
\begin{equation*}
S(F, X) \ll  X^{1 - 1/10+\rho/5 + \varepsilon} \ll X^{\frac{9+2\rho}{10} + \varepsilon}.
\end{equation*}
This completes the proof of Theorem \ref{thm:linear}.

\section{Non-linear case}
\subsection{Preliminaries}
\begin{lemma}[Exponent Pair Lemma]\label{lem:exponent_pair}
Let $g$ be a smooth, real-valued function defined on an interval containing $[B, B+h]$ with $0 < h \leq B$. Suppose there exists a parameter $A = A(g) > 0$ such that for all integers $j \geq 1$ and all $x \asymp B$,
\begin{equation*}\label{eq:deriv_bounds}
\left| g^{(j)}(x) \right| \asymp_j A B^{1-j} ,
\end{equation*}
where the implied constants depend solely on $j$. Then for any exponent pair $(\kappa, \lambda)$ satisfying
\begin{equation*}
0 \leq \kappa \leq \frac{1}{2} \leq \lambda \leq 1 ,
\end{equation*}
we have the exponential sum estimate
\begin{equation*}
\sum_{B < n \leq B+h} e(g(n)) \ll A^\kappa B^\lambda. 
\end{equation*}
\end{lemma}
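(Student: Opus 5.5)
This lemma is essentially the definition of an exponent pair, read in the form used by Graham--Kolesnik, so the plan is to reduce to that definition rather than prove anything from scratch. Recall that $(\kappa,\lambda)$ is an exponent pair if for every $N\ge 1$, every interval $I\subseteq (N,2N]$, and every smooth $f$ with $|f^{(j)}(x)|\asymp_j y N^{-j}$ on $I$ (with $y\geq N$, or more generally $y>0$ after the standard normalisation) one has $\sum_{n\in I} e(f(n))\ll y^{\kappa}N^{\lambda-\kappa}$. I would set $N=B$, $f=g$, and note that the hypothesis $|g^{(j)}(x)|\asymp_j AB^{1-j}$ is exactly $|g^{(j)}|\asymp_j yN^{-j}$ with $y=AB$. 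The interval $(B,B+h]$ with $h\le B$ lies in $(B,2B]$. The definition then gives
\begin{equation*}
\sum_{B<n\le B+h} e(g(n))\ll (AB)^{\kappa}B^{\lambda-\kappa}=A^{\kappa}B^{\lambda},
\end{equation*}
which is the claim.

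The steps, in order, are these. First, fix the precise convention for exponent pairs. I would use the Graham--Kolesnik one, including the fact that membership of $(\kappa,\lambda)$ in the set of exponent pairs forces $0\le\kappa\le\tfrac12\le\lambda\le1$, which is why that range appears in the statement. Second, match the derivative normalisation as above. Third, handle the degenerate regime: when $AB<B$, i.e.\ $A<1$, some formulations of the definition require $y\ge N$. In that regime I would avoid the definition and use the Kusmin--Landau inequality instead. Since $|g'|\asymp A$, and if additionally $A$ is small enough that $\|g'\|\gg A$, this gives $\sum e(g(n))\ll A^{-1}$. Alternatively I would simply note that the trivial bound $h\le B\le A^{\kappa}B^{\lambda}$ fails only when $A$ is small, and then invoke the version of the definition valid for all $y>0$. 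The most economical choice is to cite the formulation that allows all $y>0$ (Graham--Kolesnik, Ch.~3, or Titchmarsh \S5.20), so that no case split is needed.

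The only real obstacle is this bookkeeping: making sure the cited definition covers the full range of $A$ that is used later in the paper, and that the derivative conditions are required for all $j$, as the hypothesis assumes, rather than only up to a finite order. If the later applications always have $A\gg1$, as in the Type~II Weyl-differenced sums where $A\asymp |\alpha| h X^{\theta}/B$ is large, then the first regime suffices and the lemma is a direct restatement of the definition.
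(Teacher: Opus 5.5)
For $A\gg1$ your argument is what the paper's proof amounts to. The paper just cites Graham--Kolesnik and Ivi\'c, \S2.3, and your normalization makes that citation explicit: with $N=B$ and $y=AB$ the hypothesis reads $|g^{(j)}|\asymp_j yN^{-j}$, you have $(B,B+h]\subseteq(B,2B]$, and $(AB)^{\kappa}B^{\lambda-\kappa}=A^{\kappa}B^{\lambda}$.

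\textbf{The gap is your third step, the regime of small $A$.} Neither fallback gives the stated bound there.
\begin{itemize}
\item Kusmin--Landau gives only $\ll A^{-1}$. This exceeds $A^{\kappa}B^{\lambda}$ once $A$ falls below a negative power of $B$.
\item There is no formulation of exponent pairs ``valid for all $y>0$'' with this conclusion, because the inequality is false in that range.
\end{itemize}
For a counterexample, take $g(x)=AB\log x$ and $h=B$. Then $|g^{(j)}(x)|=(j-1)!\,AB\,x^{-j}\asymp_j AB^{1-j}$ on $[B,2B]$. If $AB\log 2\le 1/8$, every $e(g(n))$ lies within angle $\pi/4$ of $e(g(B))$, so $\bigl|\sum_{B<n\le 2B}e(g(n))\bigr|\gg B$. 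On the other hand $A^{\kappa}B^{\lambda}\ll B^{\lambda-\kappa}=o(B)$ for every pair with $\kappa>0$.

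So the lemma with arbitrary $A>0$ cannot be proved by any route; the paper's bare citation glosses over the same point. What is true is either of the following:
\begin{itemize}
\item the bound $A^{\kappa}B^{\lambda}$ under the extra hypothesis $A\gg1$ (your $y\ge N$);
\item for all $A>0$, the bound $A^{\kappa}B^{\lambda}+A^{-1}$, with the second term coming from Kusmin--Landau.
\end{itemize}

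Your closing escape clause does not apply either. In the paper's Type~II step one has $A\asymp|\alpha|hX^{\theta-1}$, not $|\alpha|hX^{\theta}/B$. Since $\theta<1$, this is $<1$ for small $h$. So the small-$A$ regime genuinely occurs in the application, and the $A^{-1}$ term would have to be carried through the $h$-sum.
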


\begin{proof}
See Graham and Kolesnik \cite{GK1991}. Also see, \cite[Section 2.3]{Ivic}
\end{proof}

\begin{lemma}[Weyl--van der Corput Inequality]
Let $\{\xi(m)\}$ be a complex-valued sequence supported on an interval $I$ of length $M = |I|$. If $H$ is any positive integer, then
\begin{equation}
\label{eq:vdc1}
\left| \sum_{m} \xi(m) \right|^2 \leq \frac{|I| + H}{H} \sum_{|h| < H} \left(1 - \frac{|h|}{H}\right) \sum_{m} \xi(m) \overline{\xi(m-h)}.
\end{equation}
Separating the diagonal term ($h = 0$) and grouping non-zero symmetric shifts ($h > 0$ and $h < 0$), inequality \eqref{eq:vdc1} implies
\begin{equation}
\label{eq:vdc2}
\left| \sum_{m} \xi(m) \right|^2 \leq \frac{|I| + H}{H} \sum_{m} |\xi(m)|^2 + \frac{2(|I| + H)}{H} \sum_{1 \leq h < H} \left(1 - \frac{h}{H}\right) \operatorname{Re} \sum_{m} \xi(m+h) \overline{\xi(m)}.
\end{equation}
\label{Lemma:Weyl--van der Corput inequality}
\end{lemma}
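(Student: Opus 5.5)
The statement to prove is the Weyl--van der Corput inequality (Lemma~\ref{Lemma:Weyl--van der Corput inequality}). I will use the standard averaging-over-shifts argument. Extend $\xi$ by zero outside $I$. For each $0\le r<H$ the sum $\sum_m \xi(m)$ equals $\sum_m \xi(m-r)$. Averaging over $r$ gives
\[
H\sum_m \xi(m)=\sum_{m}\sum_{0\le r<H}\xi(m-r).
\]
The outer $m$ now runs over an interval $J$ of integers with $|J|\le |I|+H$, namely those $m$ for which some $m-r$ lies in $I$. This counts $|I|+H-1$ integers if $|I|$ denotes the number of integers in $I$. Cauchy--Schwarz over $m\in J$ then gives
\[
H^2\Bigl|\sum_m\xi(m)\Bigr|^2\le (|I|+H)\sum_m\Bigl|\sum_{0\le r<H}\xi(m-r)\Bigr|^2 .
\]

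Next I expand the square as a double sum over $r_1,r_2\in[0,H)$ of $\sum_m \xi(m-r_1)\overline{\xi(m-r_2)}$. Substituting $m\mapsto m+r_1$, each term becomes $\sum_m \xi(m)\overline{\xi(m-h)}$ with $h=r_2-r_1$. For a fixed $h$ with $|h|<H$, the number of pairs $(r_1,r_2)$ with $r_2-r_1=h$ is exactly $H-|h|$. So the right-hand side equals
\[
(|I|+H)\sum_{|h|<H}(H-|h|)\sum_m\xi(m)\overline{\xi(m-h)}.
\]
Dividing by $H^2$ gives \eqref{eq:vdc1}. The inner sum is automatically real and nonnegative in total, being a sum of squares.

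To get \eqref{eq:vdc2}, I isolate $h=0$, which contributes $\frac{|I|+H}{H}\sum_m|\xi(m)|^2$. Then I pair $h$ with $-h$ for $1\le h<H$. Substituting $m\mapsto m+h$ in the $h$-term gives $\sum_m\xi(m+h)\overline{\xi(m)}$, and the $-h$-term is $\sum_m\xi(m)\overline{\xi(m+h)}$, which is its complex conjugate. Their sum is therefore $2\operatorname{Re}\sum_m\xi(m+h)\overline{\xi(m)}$, and this gives \eqref{eq:vdc2} directly.

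The only point needing care is the length bookkeeping: the support $J$ of $m\mapsto\sum_r\xi(m-r)$ has at most $|I|+H-1\le |I|+H$ integer points. Here I interpret $|I|$ as an upper bound for the number of integers in $I$, possibly after replacing it by $M+1$; this affects only constants in later applications. Everything else is an exact identity.
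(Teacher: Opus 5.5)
Your proof is correct and follows the paper's approach. The paper only cites the standard shift-averaging and Cauchy--Schwarz argument (Graham--Kolesnik) for \eqref{eq:vdc1}, which you carry out in full, and it obtains \eqref{eq:vdc2} by the same $h\leftrightarrow -h$ pairing and reindexing that you use. Your hedge about replacing $|I|$ by $M+1$ costs nothing: a real interval of length $M$ contains at most $M+1$ integers, so $|J|\le (M+1)+H-1=M+H$, and the constant $(|I|+H)/H$ holds exactly as stated.
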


\begin{proof}
Identity \eqref{eq:vdc1} is established by standard smoothing over shifted intervals (see e.g.\ Graham and Kolesnik \cite[Lemma~2.5]{GK1991}). For $h=0$, we get the first term in \eqref{eq:vdc2}. 
For $h \neq 0$, pairing $+h$ and $-h$ gives
\[
\sum_{m} \xi(m) \overline{\xi(m-h)} + \sum_{m} \xi(m) \overline{\xi(m+h)}.
\]
Re-indexing the first sum with $m' = m - h$ yields $\sum_{m'} \xi(m'+h) \overline{\xi(m')}$, while the second sum is its complex conjugate
\[
\sum_{m} \xi(m) \overline{\xi(m-h)} + \sum_{m} \xi(m) \overline{\xi(m+h)} = 2 \operatorname{Re} \sum_{m} \xi(m+h) \overline{\xi(m)}.
\]
\end{proof}

As an application of Cauchy--Schwarz and the Weyl--van der Corput inequality, we derive the following lemma.

\begin{lemma} 
Let $M, N \geq 2$ and $X \asymp MN$. Let $\{a(m): m \sim M\}$ and $\{b(n): n \sim N\}$ be two arbitrary sequences of complex numbers satisfying the mean-value bounds
\begin{equation*}
\sum_{m \sim M} |a(m)|^2 \ll M \log^{c_0} M \qquad \text{and} \qquad \sum_{n \sim N} |b(n)|^2 \ll N \log^{c_0} N
\end{equation*}
for some constant $c_0 > 0$. Consider the bilinear form
\begin{equation*}
S = \sum_{m \sim M} \sum_{n \sim N} a(m) b(n) e\left(f(m,n)\right),
\end{equation*}
where $f(m,n) = \alpha m^\theta n^\theta$ with $0 < \theta < 1$. Then, we have
\begin{equation*} 
|S| \ll M N^{\frac{165}{194}} X^{\frac{13(\theta-1)}{194}}.
\end{equation*}
\end{lemma}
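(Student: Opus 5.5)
The plan is to apply Cauchy--Schwarz in $m$ to separate $a(m)$, then Weyl difference in the $n$-variable, and finally estimate the resulting one-dimensional sum in $m$ with Lemma~\ref{lem:exponent_pair} using Bourgain's pair $(\kappa,\lambda)=(13/84,55/84)$. Since the target bound is $MN^{165/194}X^{13(\theta-1)/194}$, with $N$ appearing to a power below $1$, the Weyl differencing should run over the $n$-variable, with the shift parameter $H$ optimized at the end.

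First, Cauchy--Schwarz gives
\begin{equation*}
|S|^2 \ll M\log^{c_0}M \sum_{m\sim M}\Bigl|\sum_{n\sim N} b(n)e(\alpha m^\theta n^\theta)\Bigr|^2 .
\end{equation*}
Next, apply \eqref{eq:vdc2} from Lemma~\ref{Lemma:Weyl--van der Corput inequality} to the inner $n$-sum with a parameter $1\le H\le N$. The diagonal $h=0$ contributes
\begin{equation*}
\frac{N}{H}\sum_{m}\sum_n|b(n)|^2\ll \frac{MN^2}{H}\log^{c_0}N .
\end{equation*}
For $1\le h<H$, the off-diagonal terms become
\begin{equation*}
\frac{N}{H}\sum_{h<H}\Bigl|\sum_{n} b(n+h)\overline{b(n)}\sum_{m\sim M} e\bigl(\alpha m^\theta((n+h)^\theta-n^\theta)\bigr)\Bigr|,
\end{equation*}
after interchanging the $m$ and $n$ sums. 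For fixed $n,h$, the inner phase is $g(x)=\alpha\,\Delta_h\, x^\theta$ with $\Delta_h=(n+h)^\theta-n^\theta\asymp hN^{\theta-1}$. Its derivatives satisfy $|g^{(j)}(x)|\asymp_j A\,M^{1-j}$ on $x\asymp M$, where $A\asymp |\alpha| hN^{\theta-1}M^\theta\asymp hX^\theta/N$. Lemma~\ref{lem:exponent_pair} then gives
\begin{equation*}
\sum_{m\sim M} e(g(m))\ll (hX^\theta N^{-1})^{\kappa}M^{\lambda}.
\end{equation*}
This requires checking the derivative hypotheses uniformly in $h\le H\le N$, which is fine because $\theta\notin\{0,1\}$. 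Summing over $n$ with $\sum_n|b(n+h)b(n)|\ll N\log^{c_0}N$, and over $h<H$, we obtain
\begin{equation*}
|S|^2\ll X^{\varepsilon}\Bigl(\frac{M^2N^2}{H}+ M N^{2}H^{\kappa}(X^\theta/N)^{\kappa}M^{\lambda}\Bigr).
\end{equation*}

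The last step is to choose $H$ to balance the two terms, $H^{1+\kappa}=M^{1-\lambda}N^{\kappa}X^{-\kappa\theta}$, and to express the result through $X=MN$. With $\kappa=13/84$ and $\lambda=55/84$, we have $1+\kappa=97/84$, which accounts for the denominator $194=2\cdot 97$. Equivalently, one can invoke the standard ``choose $H$ optimally'' lemma (Srinivasan/Graham--Kolesnik) to absorb the boundary cases $H<1$ and $H>N$. These boundary cases contribute terms such as $MN^{1/2}$ or trivial bounds, which I expect to be dominated under the paper's later parameter ranges, or absorbed here.

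I expect the bookkeeping of this optimisation to be the main obstacle: matching exactly $N^{165/194}X^{13(\theta-1)/194}$ and confirming that the powers of $M$ recombine to $M^1$. If the direct optimisation gives a different split of the exponents between $M$ and $N$, I would use $X\asymp MN$ to trade powers and check which side of the constraint $H\le N$ holds. The statement as written drops the $X^\varepsilon$ and log factors, which I would restore.
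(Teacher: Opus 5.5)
\textbf{The route you propose cannot give the stated bound, and the mismatch you anticipated is structural, not bookkeeping.}

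First, in the exponent pair lemma $A$ is the size of $g'$ (take $j=1$). For $g(x)=\alpha\Delta_h x^\theta$ on $x\asymp M$ one has $g'(x)\asymp hN^{\theta-1}M^{\theta-1}\asymp hX^{\theta-1}$. Your value $A\asymp hX^\theta/N$ is the size of $g$ itself, which is a factor $M$ too large. With the correct $A$, differencing in $n$ gives
\begin{equation*}
|S|^2\ll \frac{M^2N^2}{H}+M^{1+\lambda}N^{2}H^{\kappa}X^{\kappa(\theta-1)},\qquad H^{1+\kappa}=M^{1-\lambda}X^{\kappa(1-\theta)},
\end{equation*}
hence $|S|\ll NM^{165/194}X^{13(\theta-1)/194}$. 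This is the mirror image of the target. With your normalisation you would instead get $M^{165/194}N^{181/194}X^{13\theta/194}$.

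These differ from $MN^{165/194}X^{13(\theta-1)/194}$ by $(N/M)^{29/194}$ and $N^{29/194}M^{-16/194}$ respectively. Neither factor is a power of $MN$, so ``trading powers via $X\asymp MN$'' cannot remove it, and for $N>M$ your bound is weaker than the claim.

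The paper differences in the other variable. It applies Cauchy--Schwarz over $n$ (keeping $b$ outside) and van der Corput in $m$ with $H\le M$. It then swaps the sums and applies the pair $(13/84,55/84)$ to the $n$-sum with $A\asymp hX^{\theta-1}$ and $B=N$. This gives $M^2N^2/H+M^2N^{1+\lambda}H^\kappa X^{\kappa(\theta-1)}$ and the choice $H^{1+\kappa}=N^{1-\lambda}X^{\kappa(1-\theta)}$, which yields exactly $MN^{165/194}X^{13(\theta-1)/194}$. Your heuristic was backwards: the exponent $165/194<1$ on $N$ comes from the factor $N^\lambda$ of the exponent pair applied to the $n$-sum, and that requires differencing in $m$.

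\textbf{Second, the step you wave through (``fine because $\theta\notin\{0,1\}$'') genuinely fails, in your route and in the paper's alike.} Exponent pair bounds presuppose $A\gg1$. Here $A\asymp hX^{\theta-1}$, which is $<1$ for all $h<X^{1-\theta}$, and in most ranges this covers every $h<H$. For small $h$, Kusmin--Landau together with the sum--integral comparison shows the inner sum is really of order $\min(B,1/A)$. That is far larger than $A^\kappa B^\lambda$.

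In fact the inequality itself is false for small $\theta$. Take $\theta=1/20$, $M=N=X^{1/2}$, $b\equiv1$, and $a(m)$ unimodular with $a(m)T(m)=|T(m)|$, where $T(m)=\sum_{n\sim N}e(\alpha m^\theta n^\theta)$. Then:
\begin{enumerate}
\item The $n$-derivative of the phase is $\asymp X^\theta/N=o(1)$, while its total variation is $\asymp X^\theta\to\infty$.
\item Comparing $T(m)$ with the integral and integrating by parts gives $|T(m)|\asymp X^{1/2-\theta}$; the two boundary terms have different moduli, so they cannot cancel.
\item Hence $|S|\asymp X^{1-\theta}=X^{19/20}$, whereas the claimed bound is $X^{3343/3880}\approx X^{0.862}$.
\end{enumerate}
So no choice of $H$ closes the argument for all $0<\theta<1$. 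You would have to restrict to shifts with $hX^{\theta-1}\gg1$, or carry the extra term $1/A$, and either change alters the final estimate.
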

\begin{proof}
Applying the Cauchy--Schwarz inequality across the variable $n$ gives
\[
|S|^2 \leq \left(\sum_{n \sim N} |b(n)|^2\right) \sum_{n \sim N} \left| \sum_{m \sim M} a(m) e\left(f(m,n)\right) \right|^2.
\]
Setting $\xi(m) = a(m) e(f(m,n))$ in Lemma~\ref{Lemma:Weyl--van der Corput inequality} with $1 \leq H \leq M$, we obtain
\begin{align*}
|S|^2 &\ll N^{1+\varepsilon} \sum_{n \sim N} \left[ \frac{M}{H} \sum_{m \sim M} |a(m)|^2 + \frac{M}{H} \sum_{1 \leq h<H}  \sum_{m \sim M} a(m+h)\overline{a(m)} e\left(\Delta_h f(m,n)\right)  \right] \\
&\ll \frac{(MN)^{2+\varepsilon}}{H} + \frac{MN}{H} \sum_{1 \leq h<H} \sum_{m \sim M} |a(m+h) a(m)| \left| \sum_{n \sim N} e\left(\Delta_h f(m,n)\right) \right|,
\end{align*}
where $\Delta_h f(m,n) = f(m+h,n) - f(m,n)$.\\
By the Mean Value Theorem with respect to $m$
\[
(m+h)^\theta - m^\theta = \theta h m^{\theta-1} + O\left(h^2 M^{\theta-2}\right).
\]
For fixed $h$ and $m$, write $g(n) := \Delta_h f(m,n) \approx \alpha \theta h m^{\theta-1} n^\theta$. Differentiating with respect to $n$ gives
\[
|g'(n)| = \alpha \theta^2 h m^{\theta-1} n^{\theta-1} = \alpha \theta^2 h (mn)^{\theta-1} \asymp \alpha h X^{\theta-1}.
\]
Applying an exponent pair $(k, \lambda)$ with $0 \leq k \leq \frac{1}{2} \leq \lambda \leq 1$ to the inner $n$-sum gives
\[
\left| \sum_{n \sim N} e\left(\Delta_h f(m,n)\right) \right| \ll \left(\alpha h X^{\theta-1}\right)^k N^\lambda.
\]
Summing over $m \sim M$ (which yields $\ll M$) and over $1 \leq h<H$ (which yields $H^{k+1}$), the off-diagonal contribution becomes
\[
\frac{MN}{H} \cdot M \cdot H^{k+1} X^{k(\theta-1)} N^\lambda = M^2 N^{\lambda+1} H^k X^{k(\theta-1)}.
\]
The full bound on $|S|^2$ is therefore
\begin{equation*}
|S|^2 \ll \frac{(MN)^{2+\varepsilon}}{H} + M^2 N^{\lambda+1} H^k X^{k(\theta-1)}.
\end{equation*}
Equating the diagonal and off-diagonal terms gives $H = N^{\frac{1-\lambda}{k+1}} X^{\frac{k(1-\theta)}{k+1}}.$
Substituting $H$ back yields
\begin{equation*}
\label{eq:general_S_bound}
|S| \ll M N^{\frac{2k+1+\lambda}{2(k+1)}} X^{\frac{k(\theta-1)}{2(k+1)}}.
\end{equation*}
Bourgain \cite{Bourgain2017} established the exponent pair $(k, \lambda) = \left(\frac{13}{84}, \frac{55}{84} \right).$ Notice the exact algebraic identity $\lambda - k = \frac{1}{2}$. Consequently,
\[
\frac{2k+1+\lambda}{2(k+1)} = \frac{165}{194}.
\]
Therefore, the optimal shift parameter $H$ and the bilinear bound $|S|$ are
\begin{equation*}
H = N^{\frac{29}{97}} X^{\frac{13(1-\theta)}{97}}, \qquad |S| \ll M N^{\frac{165}{194}} X^{\frac{13(\theta-1)}{194}}.
\end{equation*}

\end{proof}

\subsection{Proof of Theorem \ref{thm:non-linear}}

Applying Vaughan's identity to $S(F, X, \theta) $ decomposes the sum into
\[
S(F, X, \theta) = S_1 + S_2 + S_3 + S_4 + O\left((UV)^{1+\varepsilon}\right)
\]
Here we set $U = X^u$ and $V = X^v$, where $u, v \geq 0$ are open parameters, which we choose later. The Type~I and remainder terms satisfy
\begin{align*}
S_1 &\ll U, \\
S_2 &\ll X^{\frac{3}{4} + \frac{9\theta}{28}} V^{1/2} \log^{1/2} V, \\
S_3 &\ll X^{\frac{3}{4} + \frac{9\theta}{28}} (UV)^{1/2} \log^{1/2}(UV).
\end{align*}
The Type~II contribution $S_4$ decomposes into $\ll \log^2 X$ dyadic bilinear sums $S$ with $U < M \leq X/V$, $V < N \leq X/U$, $  UV< X$, and $MN \asymp X$. Using our bound with $M \asymp X/N$
\[
S_4 \ll \left(\frac{X}{N}\right) N^{\frac{165}{194}} X^{\frac{13(\theta-1)}{194}} = X^{1 + \frac{13(\theta-1)}{194}} N^{-\frac{29}{194}} = X^{\frac{181+13\theta}{194}} N^{-\frac{29}{194}}.
\]
Because $-\frac{29}{194} < 0$ and $N \geq V = X^v$, the supremum is attained at $N = X^v$, so the overall bound is
\[
S(F, X, \theta) \ll X^{u+v} + X^{\frac{3}{4} + \frac{9\theta}{28} + \frac{u+v}{2}} + X^{\frac{181+13\theta-29v}{194}}.
\]
Since all terms are monotonically increasing in $u$, we choose $u \to 0$ ($U \approx 1$), reducing the estimate to
\begin{equation*}
S(F, X, \theta) \ll X^v + X^{\frac{3}{4} + \frac{9\theta}{28} + \frac{v}{2}} + X^{\frac{181+13\theta-29v}{194}}.
\end{equation*}
Equating the last two exponents to gives $v = \frac{497 - 691\theta}{1764}$, and since $v > 0$, implies $\theta < \frac{497}{691} \approx 0.71925.$
Substituting this optimal $v$ back into the exponent gives
\begin{equation*}
S(F, X, \theta) \ll X^{\frac{3143 + 443\theta}{3528} + \varepsilon} \approx X^{0.89087 + 0.12557\,\theta + \varepsilon}.
\end{equation*}

The non-triviality condition $\frac{3143 + 443\theta}{3528} < 1 $ requires $ \theta < \frac{385}{443} \approx 0.86907$, 
which holds throughout the domain $\theta \in \left(0, \frac{497}{691}\right)$.

\subsection{Verification of the Shift Parameter Range $1 \leq H \leq M$}

The optimal shift parameter is $H = N^{\frac{29}{97}} X^{\frac{13(1-\theta)}{97}}.$ Setting $N = X^n$ and $M = X^m$ with $m + n = 1$ ($n, m \geq 0$), the exponent of $H$ in powers of $X$ is $ \frac{29n + 13(1-\theta)}{97}$. Since $0<\theta<1$, clearly the exponent is positive. Hence, $H \geq 1$ holds unconditionally. Now, comparing $H$ to $M = X^{1-n}$ gives $n \leq \frac{84 + 13\theta}{126}$, which is equivalent to $m \geq \frac{42 - 13\theta}{126}$.
At $\theta = 0$, this requires $m \geq \frac{42}{126} = \frac{1}{3} $. At the boundary $M \asymp X/V = X^{1-v}$, since $v = \frac{497 - 691\theta}{1764} \leq \frac{497}{1764}$, we have $m = 1 - v \geq 1 - 0.28175 = 0.71825 \gg \frac{1}{3}$. Therefore, $1 \leq H \leq M$ and $U \leq M \leq X/V$ are strictly satisfied in the entire valid range.

\subsection{Permissible Range of $\theta$ and the Balanced Exponents}\label{Sec-analysis-exp-pair}

Although our primary choice is Bourgain's exponent pair $(k, \lambda) = \left(\frac{13}{84}, \frac{55}{84}\right)$, the derivation applies equally to other standard exponent pairs. For any valid pair $(k, \lambda)$, the condition $v > 0$ determines the admissible range $\theta < \theta_{\max}(k)$, while balancing the Type~I and Type~II exponents yields the corresponding estimate $S(F, X, \theta) \ll X^{E_{\text{bal}}(\theta)+\varepsilon}$. 

A comparison across different choices of $(k, \lambda)$ illustrates how decreasing $k$ systematically extends the admissible domain of $\theta$
\begin{itemize}
    \item \textbf{Classical pair} $(k, \lambda) = \left(\frac{1}{2}, \frac{1}{2}\right)$:
    \[
    \theta < \frac{7}{13} \approx 0.53846, \qquad E_{\text{bal}}(\theta) = \frac{13}{16} + \frac{23\theta}{112} = \frac{91+23\theta}{112}.
    \]
    
    \item \textbf{$A$-process pair} $(k, \lambda) = A\left(\frac{1}{2}, \frac{1}{2}\right) = \left(\frac{1}{6}, \frac{2}{3}\right)$:
    \[
    \theta < \frac{5}{7} \approx 0.71429, \qquad E_{\text{bal}}(\theta) = \frac{8}{9} + \frac{8\theta}{63} = \frac{56+8\theta}{63}.
    \]
    
    \item \textbf{Rankin's pair} $(k, \lambda) = \left(\frac{1}{2}\Theta + \varepsilon, \frac{1}{2} + \frac{1}{2}\Theta + \varepsilon\right)$ with $\Theta \approx 0.329021$:
    \[
    \theta < \frac{7(2-\Theta)}{18-5\Theta} \approx 0.71519, \qquad E_{\text{bal}}(\theta) = \frac{11-\Theta}{12} + \theta\left(\frac{9+5\Theta}{84}\right).
    \]
    
    \item \textbf{Bourgain's pair} $(k, \lambda) = \left(\frac{13}{84} , \frac{55}{84} \right)$:
    \[
    \mathbf{\theta < \frac{497}{691} \approx 0.71925}, \qquad E_{\text{bal}}(\theta) = \frac{449}{504} + \frac{443\theta}{3528} = \frac{3143+443\theta}{3528}.
    \]
    
    \item \textbf{Conjectured pair} $(k, \lambda) = \left(\varepsilon, \frac{1}{2} + \varepsilon\right)$ (under the Exponent Pairs Conjecture):
    \[
    \theta < \frac{7}{9} \approx 0.77778, \qquad E_{\text{bal}}(\theta) = \frac{11}{12} + \frac{3\theta}{28} = \frac{77+9\theta}{84} .
    \]
\end{itemize}
In particular, Bourgain's pair provides the widest unconditional range of non-triviality ($\theta < \frac{497}{691}$) among the standard symmetric exponent pairs satisfying $\lambda - k = \frac{1}{2}$, while the limit $\theta < \frac{7}{9}$ represents the natural barrier of the present decomposition.

\subsection*{Acknowledgements.} The author gratefully acknowledges the financial support from the National Board for Higher Mathematics (NBHM), India, through its postdoctoral fellowship (Ref. No. 0204/9(1)/2026-R$\And$D-II/4788). The author expresses sincere gratitude to Prof. Ritabrata Munshi for his valuable suggestions throughout this work.

\end{document}